\definecolor{rosso}{rgb}{0.85,0,0}
\theoremstyle{thmstyleone}
\newtheorem{theorem}{Theorem}
\theoremstyle{thmstyletwo}
\newtheorem{remark}{Remark}
\theoremstyle{thmstylethree}
\newcommand{\beq}{\begin{equation}}
\newcommand{\eeq}{\end{equation}}
\newcommand{\beqa}{\begin{eqnarray}}
\newcommand{\eeqa}{\end{eqnarray}}
\newcommand {\rmd} { {\mathrm d} }
\begin{document}

\title[Spatial segregation across invading fronts in models for the growth of heterogeneous cell populations]{Spatial segregation across travelling fronts in individual-based and continuum models for the growth of heterogeneous cell populations}

\author*[1]{\fnm{Jos\'{e}} A. \sur{Carrillo}}\email{carrillo@maths.ox.ac.uk}

\author[2]{\fnm{Tommaso} \sur{Lorenzi}}

\author[3,4]{\fnm{Fiona} R. \sur{Macfarlane}}

\affil[1]{\orgdiv{Mathematical Institute}, \orgname{University of Oxford}, \orgaddress{\city{Oxford}, \country{United Kingdom}}}

\affil[2]{\orgdiv{Department of Mathematical Sciences ``G. L. Lagrange''}, \orgname{Politecnico di Torino}, \orgaddress{\city{Turin}, \country{Italy}}}

\affil[3]{\orgdiv{School of Mathematics and Statistics}, \orgname{University of St Andrews}, \orgaddress{\city{St Andrews}, \country{United Kingdom}}}

\affil[4]{\orgdiv{Applied BioSimulation}, \orgname{Certara (UK)}, \orgaddress{\city{Sheffield}, \country{United Kingdom}}}

\abstract{We consider a partial differential equation model for the growth of heterogeneous cell populations subdivided into multiple distinct discrete phenotypes. In this model, cells preferentially move towards regions where they are less compressed, and thus their movement occurs down the gradient of the cellular pressure. The cellular pressure is defined as a weighted sum of the densities (i.e. the volume fractions) of cells with different phenotypes. To translate into mathematical terms the idea that cells with distinct phenotypes have different morphological and mechanical properties, both the cell mobility and the weighted amount the cells contribute to the cellular pressure vary with their phenotype. We formally derive this model as the continuum limit of an on-lattice individual-based model, where cells are represented as single agents undergoing a branching biased random walk corresponding to  phenotype-dependent and pressure-regulated cell division, death, and movement. Then, we study travelling wave solutions whereby cells with different phenotypes are spatially segregated across the invading front. Finally, we report on numerical simulations of the two models, demonstrating excellent agreement between them and the travelling wave analysis. The results presented here indicate that inter-cellular variability in mobility can \textcolor{black}{support the maintenance of spatial segregation across invading fronts, whereby cells with a higher mobility drive invasion by occupying regions closer to the front edge.}}

\keywords{Phenotypic Heterogeneity, Individual-Based Models, Continuum Models, Travelling Waves, Spatial Segregation.}

\maketitle

\section{Introduction}\label{sec:section1}

\subsection{Background}\label{sec:section1p1}
Systems of partial differential equations (PDEs) modelling the growth of populations that disperse to avoid crowding~\cite{bertsch1985interacting,gurtin1984note} have been 
applied to describe the spatiotemporal dynamics of multiple types of cells underpinning tissue development, wound healing, and tumour
growth~\cite{ambrosi2002closure,bertsch2012nonlinear,bubba2020hele,byrne2003modelling,carrillo2019population,chaplain2006mathematical,ciarletta2011radial,david2024degenerate,drasdo2012modeling,giverso2022effective,gwiazda2019two,lorenzi2017interfaces,mimura2010free,oelschlager1989derivation,roose2007mathematical,preziosi2009multiphase}. 

Focusing on a one-dimensional spatial scenario, and considering two cell types, a prototypical example of these models is provided by the following PDE system
\begin{subnumcases}{\label{eq:PDEmodelORI}}
   \partial_t n_1 - \partial_x \left(n_1 \, \partial_x \rho \right) = G_1(\rho) \, n_1, & \label{eq:PDEmodelORI_1}
   \\
   \nonumber\\
   \partial_t n_2 - \partial_x \left(n_2 \, \partial_x \rho \right) = G_2(\rho) \, n_2, & $(t,x) \in (0,\infty) \times \mathbb{R}$ \label{eq:PDEmodelORI_2}\\
   \nonumber\\
   \rho(t,x) := n_1(t,x) + n_2(t,x),& \label{eq:PDEmodelORI_3}
\end{subnumcases}
Here, the functions $n_1(t,x)$ and $n_2(t,x)$ are the densities (i.e. the volume fractions) of cells of types 1 and 2 at position $x$ at time $t$, and the function $\rho(t,x)$ defined via~\eqref{eq:PDEmodelORI_3} is the total cell density (i.e. the total cell volume fraction). The transport terms on the left-hand sides of the PDEs~\eqref{eq:PDEmodelORI_1} and \eqref{eq:PDEmodelORI_2} model the effect of cell movement and capture the tendency of cells to move away from overcrowded regions (i.e. to move down the gradient of the total cell density)~\cite{chaplain2006mathematical}. Moreover, the reaction terms on the right-hand sides model the effect of cell division and death, with the functions $G_1(\rho)$ and $G_2(\rho)$ being the net growth rates of the densities of cells of types 1 and 2. These functions depend on the total cell density so as to integrate the effect of density-dependent inhibition of growth (i.e. the cessation of growth at sufficiently high cell density)~\cite{lieberman1981density}.

A variation on the model~\eqref{eq:PDEmodelORI} is given by the following PDE system
\begin{subnumcases}{\label{eq:PDEmodelORIp}}
   \partial_t n_1 - \partial_x \left(n_1 \, \partial_x p \right) = G_1(p) \, n_1, & \label{eq:PDEmodelORIp_1}
   \\
   \nonumber\\
   \partial_t n_2 - \partial_x \left(n_2 \, \partial_x p \right) = G_2(p) \, n_2, & $(t,x) \in (0,\infty) \times \mathbb{R}$ \label{eq:PDEmodelORIp_2}\\
   \nonumber\\
   p(t,x) := \Pi[\rho](t,x), \  \rho(t,x) := n_1(t,x) + n_2(t,x),& \label{eq:PDEmodelORIp_3}
\end{subnumcases}
The PDE system~\eqref{eq:PDEmodelORIp} can be obtained by replacing the cell density $\rho(t,x)$ in~\eqref{eq:PDEmodelORI} with the cellular pressure $p(t,x)$ and then closing the resulting system for the cell density functions by defining the cell pressure as a function of the total cell density through an appropriate constitutive relation $\Pi[\rho]$~\cite{ambrosi2002closure,perthame2014hele}. This makes it possible to incorporate into the model the effects of pressure-regulated growth -- i.e. the fact that cells will stop dividing if the pressure at their current position overcomes a critical value~\cite{byrne2003modelling,byrne2009individual,drasdo2012modeling,ranft2010fluidization} -- and pressure-regulated cell movement -- i.e. the movement of cells down the gradient of the cellular pressure towards regions where they are less compressed~\cite{byrne1997free,byrne2003modelling}.   

These models, which provide a population-level description of cell dynamics, can be derived as the continuum limits of underlying individual-based models, which track the dynamics of single cells and are thus able to capture the finer details of single-cell movement, division, and death~\cite{drasdo2005coarse}. To this end, a range of limiting procedures have been developed and employed for transitioning between individual-based models for the growth of cell populations and continuum models formulated as systems of PDEs of the form of~\eqref{eq:PDEmodelORI} and~\eqref{eq:PDEmodelORIp} and related forms~\cite{alasio2022existence,carrillo2019population,chaplain2020bridging,dyson2012macroscopic,fozard2010continuum,galiano2014cross,lorenzi2020individual,penington2011building,pillay2017modeling,pillay2018impact,simpson2011models,simpson2024discrete}.

An interesting feature of PDE systems like~\eqref{eq:PDEmodelORI} and~\eqref{eq:PDEmodelORIp} is that they \textcolor{black}{can support} spatial segregation between different cell types -- i.e. the fact that if cells of different types are initially separated (i.e. they occupy distinct regions of the spatial domain at the initial time) then they will remain separated also at later times. Mathematically speaking, this means that the solutions of these PDE systems may exhibit a propagation of segregation property~\cite{david2024degenerate}. This is an interesting mathematical aspect, the study of which has received increasing attention in recent decades~\cite{bertsch2015travelling,bertsch2012nonlinear,bertsch1987interacting,bertsch1987degenerate,bertsch1985interacting,burger2019segregation,carrillo2018zoology,carrillo2018splitting,girardin2022spatial,galiano2014cross,lorenzi2017interfaces,mimura2010free,FBC24}. Furthermore, it makes such models an appropriate theoretical framework to investigate the mechanisms underlying cell segregation processes leading to the formation of sharp borders between cells of distinct types or with different phenotypes, which is observed both in normal development and in tumourigenesis~\cite{batlle2012molecular}.

\subsection{Object of study}\label{sec:section1p2}
In this paper, we consider the following model for the growth of a phenotypically heterogeneous population comprising cells of different types (i.e. with different phenotypes), which are labelled by the index $i=1, \ldots, I$ with $I  \textcolor{black}{\geq} 2$:
\begin{subnumcases}{\label{eq:PDEmodel}}
   \partial_t n_1 - \mu_1 \, \partial_x \left(n_1 \, \partial_x p \right) = G_1(p) \, n_1 & \label{eq:PDEmodel_1}
   \\
   \nonumber\\
   \partial_t n_i - \mu_i \, \partial_x \left(n_i \, \partial_x p \right) = 0, \quad i=2, \ldots, I & $(t,x) \in (0,\infty) \times \mathbb{R}$ \label{eq:PDEmodel_2}\\
   \nonumber\\
   p(t,x) := \sum_{i=1}^I \omega_i \, n_i(t,x),& \label{eq:PDEmodel_3}
\end{subnumcases}

In the model~\eqref{eq:PDEmodel}, the functions $n_1(t,x), \ldots, n_I(t,x)$ represent the densities (i.e. the volume fractions) of cells with phenotypes $1, \ldots, I$ at position $x$ at time $t$, while the function $p(t,x)$ represents the cellular pressure, which is defined as a function of the cell densities through the constitutive relation~\eqref{eq:PDEmodel_3}. The positive parameters $\omega_1, \ldots, \omega_I$ provide a measure of the weighted amounts that cells with phenotypes $1, \ldots, I$ contribute towards the cellular pressure, and the values of these parameters are related to the morphological and mechanical properties of the cells (such as cell size and stiffness), which may vary depending on the cell phenotype~\cite{masaeli2016multiparameter}. In analogy with model~\eqref{eq:PDEmodelORIp}, the second terms on the left-hand sides of the PDEs~\eqref{eq:PDEmodel_1} and~\eqref{eq:PDEmodel_2} are the rates of change of the densities of cells with phenotypes $1, \ldots, I$ due to pressure-regulated cell movement. The positive parameter $\mu_i$ is the mobility coefficient of cells with phenotype $i$~\cite{ambrosi2002closure,byrne2009individual}, which in model~\eqref{eq:PDEmodelORIp} is implicitly assumed to be the same for all cells (and it is then set to 1), but in fact it can vary due again to differences in morphological and mechanical properties (such as cell elongation and nucleus deformability) between cells with different phenotypes~\cite{kalukula2022mechanics,lamouille2014molecular}. Moreover, similarly to model~\eqref{eq:PDEmodelORIp}, the term on the right-hand side of the PDE~\eqref{eq:PDEmodel_1} is the rate of change of the density of cells with phenotype $1$ due to pressure-regulated growth and, therefore, the function $G_1(p)$ is the net growth rate of the density of cells with phenotype $1$ when exposed to the cellular pressure $p$. Focusing on the impact of inter-cellular variability in mobility on cell dynamics rather than variability in division and death rates, the right-hand sides of the PDEs~\eqref{eq:PDEmodel_2} are set to zero. This corresponds to a biological scenario where division and death of cells with phenotypes $2, \ldots, I$ occur on much slower time scales compared to division and death of cells with phenotype $1$, and can thus be neglected. For example, in the context of cancer, tumour-derived leader cells undergo epithelial to mesenchymal transition acquiring mesenchymal characteristics including significantly increased motility and significantly reduced proliferation~\cite{chen2020makes,huang2022molecular,konen2017image,vilchez2021decoding,wang2023tumour,zanotelli2021mechanoresponsive}. Specifically, it has been observed that on short time-scales the proliferation rate of leader cells compared to follower cells is negligible~\cite{konen2017image}. Hence, in the framework of model~\eqref{eq:PDEmodel}, cells with phenotype $1$ could be regarded as follower-type cells, while cells with phenotypes $2, \ldots, I$ could be regarded as leader-type cells with heterogeneous morphological and mechanical properties.

In contrast to previous works on related models, which considered two cell phenotypes only (i.e. $i=1,2$) and assumed the values of the weights $\omega_i$ and the values of the mobility coefficients $\mu_i$ to be the same for both phenotypes, in this paper we consider an arbitrary number $I  \textcolor{black}{\geq} 2$ of cell phenotypes and allow both the values of $\omega_i$ and the values of $\mu_i$ to vary with the phenotype, in order to capture intra-population phenotypic heterogeneity more accurately.

\subsection{Outline of the paper}\label{sec:section1p3}
Building on the modelling framework developed in~\cite{chaplain2020bridging}, we first formulate an individual-based model (see Section~\ref{sec:section2}) where cells are represented as single agents undergoing phenotype-dependent and pressure-regulated cell division, death, and movement according to a set of rules. These rules result in cells performing a branching biased random walk over the one-dimensional lattice that represents the spatial domain~\cite{hughes1996random,johnston2012mean,penington2011building}. Then, using a limiting procedure analogous to the one that we employed in~\cite{bubba2020discrete,macfarlane2020hybrid,macfarlane2022individual}, we formally derive model~\eqref{eq:PDEmodel} as the continuum limit of this individual-based model (see Section~\ref{sec:section3} and Appendix~\ref{app:appendixA}). After that, generalising the method of proof that developed in~\cite{lorenzi2017interfaces}, we carry out travelling wave analysis of the model~\eqref{eq:PDEmodel} (see Section~\ref{sec:section4}) and study, under appropriate assumptions on the function $G_1$ and the mobility coefficients $\mu_1, \ldots, \mu_I$, travelling front solutions wherein cells with phenotypes labelled by different values of the index $i=1, \ldots, I$ are spatially segregated across the front (i.e. they occupy distinct regions of the front). Finally, we report on numerical simulations of the individual-based model and numerical solutions of the PDE model~\eqref{eq:PDEmodel}, which demonstrate excellent agreement between the two models, thus validating the formal limiting procedure employed to derive the continuum limit of the individual-based model, and confirm the results of travelling wave analysis (see Section~\ref{sec:section5}). We conclude with a discussion of the main results obtained and provide a brief overview of possible research perspectives (see Section~\ref{sec:section6}).

\section{Individual-based and continuum models}\label{sec:section2}

\subsection{Individual-based model}
\label{sec:ibmodn}
Considering a one-dimensional spatial scenario, we let cells be distributed and move along the real line $\mathbb{R}$. We introduce the notation $\mathbb{R}_{+} := \{ z \in \mathbb{R} : z \geq 0 \}$, $\mathbb{R}^*_{+} :=  \mathbb{R}_{+} \setminus \{0\}$, and $\mathbb{N}_0 := \mathbb{N} \cup \{ 0 \}$, and then discretise the time variable $t\in \mathbb{R}_{+}$ and the space variable $x\in \mathbb{R}$ as $t_k=k\tau$ with $k\in\mathbb{N}_0$ and $x_j=j\Delta_x$ with $j\in\mathbb{Z}$, respectively, where $\tau \in \mathbb{R}^*_{+}$ represents the time-step and $\Delta_x \in \mathbb{R}^*_{+}$ represents the space-step. 

We consider a population comprising cells expressing one amongst $I  \textcolor{black}{\geq} 2$ distinct discrete phenotypes, each labelled by an index $i = 1,\ldots,I$, and define the density of cells with phenotype $i$ at position $x_j$ at time $t_k$, denoted $n_{i, j}^k$, as
\begin{equation}
n_{i, j}^k := \dfrac{N_{i, j}^k}{\Delta_x}, \label{eq:density_definition}
\end{equation} 
where $N_{i, j}^k$ is the number of cells with phenotype $i$ at position $x_j$ at time $t_k$. Moreover, we define the cellular pressure at position $x_j$ at time $t_k$, denoted $p_j^k$, as a function of $n_{i, j}^k$ through the following constitutive relation:
\begin{equation}
p_j^k:=\displaystyle{\sum_{i=1}^{I} \omega_i \ n_{i, j}^k}, \quad \text{with}\quad \omega_i\in\mathbb{R}_+^*.\label{eq:pressure_definition}
\end{equation} 
As mentioned in Section~\ref{sec:section1}, the constitutive relation~\eqref{eq:pressure_definition} translates into mathematical terms the idea that cells with each phenotype $i$ may contribute a different weighted amount, which is represented by the parameter $\omega_i$, to the cellular pressure. The value of the parameter $\omega_i$ is related to the morphological and mechanical properties of the cells; for instance, higher values of $\omega_i$ may correspond to larger cell size and/or larger cell stiffness~\cite{masaeli2016multiparameter}. The dynamics of the cells are governed by the rules summarised in Figure~\ref{fig:Fig1} and detailed in the following subsections.

\begin{figure}
\centering
\includegraphics[width=1\textwidth]{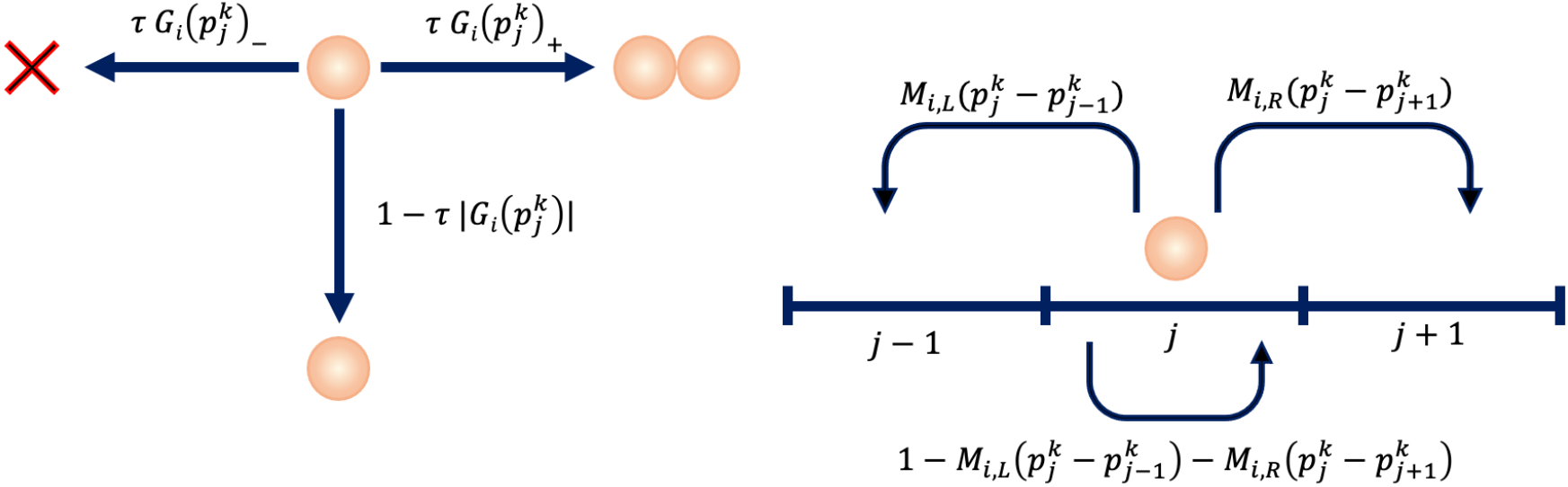}
\caption{{\bf Schematic overview of the mechanisms incorporated in the individual-based model.} Between time-steps $k$ and $k+1$, each cell with phenotype $i = 1,\ldots,I$ at spatial position $x_j$ may: divide with probability $\tau \, G_i(p_j^k)_+$, die with probability $\tau \, G_i(p_j^k)_-$, and remain quiescent with probability $1-\tau \, \left(G_i(p_j^k)_+ + G_i(p_j^k)_- \right) = 1 - \tau \, |G_i(p_j^k)|$ {({\bf \textit{left panel}})}; move to spatial positions $x_{j-1}$ or $x_{j+1}$ with probabilities $M_{i,L}(p_j^k-p_{j-1}^k)$ or $M_{i,R}(p_j^k-p_{j+1}^k)$, respectively, or remain stationary with probability $1-M_{i,L}(p_j^k-p_{j-1}^k)-M_{i,R}(p_j^k-p_{j+1}^k)$ {({\bf \textit{right panel}})}}
\label{fig:Fig1}
\end{figure}

\subsubsection{Modelling cell division and death}\label{sec:section2p1}
We model pressure-regulated cell division and death as a branching process along the spatial dimension, whereby cells divide and die with probabilities that depend on both their phenotype and the pressure they experience, as illustrated by the schematic in the left panel of Figure~\ref{fig:Fig1}. If cell division occurs, a dividing cell is instantly replaced by two identical progeny cells that inherit the spatial position and phenotype of the parent cell. Conversely, a cell undergoing cell death is instantly removed from the population. 

We introduce the function $G_i(p_j^k)$, which represents the net growth rate of the density of cells with phenotype $i=1,\ldots,I$ at spatial position $x_j$ at time $t_k$, and use this function to define the probability of cell division and death in the individual-based model. Specifically, between time-steps $k$ and $k+1$ we let a cell with phenotype $i$ at position $x_j$ divide with probability 
$$
\tau G_i(p_j^k)_+, \; \text{ where } \; (\cdot)_+=\max(0,\cdot),
$$ 
die with probability 
$$
\tau G_i(p_j^k)_-, \; \text{ where } \; (\cdot)_-=-\min(0,\cdot),
$$
and remain quiescent with probability 
$$
1-\tau \left(G_i(p_j^k)_+ + G_i(p_j^k)_- \right) = 1 - \tau |G_i(p_j^k)|.
$$ 
By choosing the time-step $\tau$ sufficiently small, we ensure that all the quantities above are between 0 and 1. 

In order to take into account the fact that cells will stop dividing if the pressure at their current position exceeds a critical value, known as the homeostatic pressure~\cite{basan2009homeostatic}, which is modelled by the parameter $\overline{p} \in \mathbb{R}_+^*$, and considering a scenario in which cells with phenotypes labelled by different values of the index $i$ may undergo cell division and death over different time scales, we make the following assumptions:
\begin{equation}
\label{eq:G_conditions}
G_i(p) := \alpha_i \, G(p), \quad \textcolor{black}{G(0) < \infty}, \quad G(\overline{p})=0, \quad \frac{\mathrm{d}G}{\mathrm{d}p}<0 \, \forall p\in\mathbb{R}_{+}.
\end{equation} 
Here the parameter $\alpha_i \in \mathbb{R}_{+}$ is linked to the time scale over which cells with phenotype $i$ undergo cell division and death. In particular, focusing on the case where division and death of cells with phenotypes $i=2, \ldots, I$ can be neglected, since they occur on much slower time scales compared to division and death of cells with phenotype $i=1$, we also assume
\beq
\label{eq:alpha_conditions}
\alpha_1 > 0 \;\; \text{ and } \;\; \alpha_i = 0 \; \text{ for } \; i=2, \ldots, I.
\eeq

\subsubsection{Modelling cell movement}\label{sec:section2p2}
We model directional cell movement in response to pressure differentials as a biased random walk, whereby the movement probabilities depend on the difference between the cellular pressure at the position occupied by the cell and the cellular pressure at neighbouring positions, as illustrated by the schematic in the right panel of Figure~\ref{fig:Fig1}. In particular, we assume that cells move down the gradient of the pressure so as to reach regions where they are less compressed. Moreover, in order to capture the fact that the phenotype of the cells determines their sensitivity to the pressure gradient, and thus their mobility, we introduce the parameters $\gamma_i\in \mathbb{R}_+^*$ to model the sensitivity to the pressure gradient of cells with phenotypes $i=1,\ldots,I$.

In the individual-based model, between time-steps $k$ and $k+1$ we let a cell with phenotype $i$ at position $x_j$ move to position $x_{j-1}$ (i.e. move left) with probability 
$$
{M}_{i,L}(p_j^k-p_{j-1}^k),
$$
move to position $x_{j+1}$ (i.e. move right) with probability 
$$
{M}_{i,R}(p_j^k-p_{j+1}^k),
$$
and remain stationary with probability 
$$
1-\left({M}_{i,L}(p_j^k-p_{j-1}^k)+{M}_{i,R}(p_j^k-p_{j+1}^k)\right).
$$
Specifically, recalling that, as described in Section~\ref{sec:section2p1}, the parameter $\overline{p}$ represents the homeostatic pressure, we use the following definitions  
\begin{equation}
\begin{array}{l}
 \quad {M}_{i,L}(p_j^k-p_{j-1}^k)=\displaystyle{\gamma_i \frac{\left(p_j^k-p_{j-1}^k\right)_+}{2\overline{p}}},\\\ \\
 \quad {M}_{i,R}(p_j^k-p_{j+1}^k)=\displaystyle{\gamma_i \frac{\left(p_j^k-p_{j+1}^k\right)_+}{2\overline{p}}}, \end{array}\quad \text{where} \quad (\cdot)_+=\max(0,\cdot),
\label{eq:define_movementIB}
\end{equation}
and choose the model parameters and functions such that $0 \leq {M}_{i,L}(p_j^k-p_{j-1}^k)+{M}_{i,R}(p_j^k-p_{j+1}^k)<1$ for all $i$, $j$, and $k$. 

Without loss of generality, considering a scenario where cells with phenotypes labelled by smaller values of the index $i$ have a lower sensitivity to the pressure gradient, and thus a lower mobility, we assume
\begin{equation}
0<\gamma_1<\gamma_2<\ldots<\gamma_{I-1}<\gamma_I.
\label{eq:gamma_conditions}
\end{equation}

\begin{remark}
\label{rem:rem1}
Taken together, assumptions~\eqref{eq:G_conditions}-\eqref{eq:alpha_conditions} and~\eqref{eq:gamma_conditions} correspond to the situation in which, due to a trade-off between cell proliferative and migratory abilities, fast-dividing cells with phenotype $i=1$ display the lowest mobility~\cite{chen2020makes,huang2022molecular,konen2017image,vilchez2021decoding,wang2023tumour,zanotelli2021mechanoresponsive}.
\end{remark}

\subsection{Corresponding continuum model}\label{sec:section3}
\textcolor{black}{As detailed in Appendix~\ref{app:appendixA},} from the branching biased random walk underlying the individual-based model presented in the previous section, one formally derives, as the corresponding continuum limit, a PDE system for the functions $n_i(t,x)$, each modelling the density (i.e. the volume fraction) of cells with phenotype $i=1, \ldots, I$, at position $x \in \mathbb{R}$ at time $t\in \mathbb{R}_+$. This is done through an extension of the limiting procedure that we employed in \cite{bubba2020discrete,macfarlane2020hybrid,macfarlane2022individual} -- see also \cite{simpson2007simulating,johnston2012mean,simpson2011models,simpson2024discrete} for related strategies.

\textcolor{black}{
In summary, one writes down a balance equation for the density of cells with phenotype $i$ at spatial position $x_j$ at time $t_{k+1}$, which depends on cell densities at time $t_{k}$ at position $x_j$ and neighbouring positions $x_{j-1}$ and $x_{j+1}$, as a result of cell movement and cell division and death. Specifically, one has}

\textcolor{black}{\begin{eqnarray}
\label{eq:mastereq}
n_{i,\ j}^{k+1}&=&n_{i,\ j}^k\left\{ \left(1+\tau G_i(p_j^k)\right)\left[1-\frac{\gamma_i \left(p_j^k-p_{j+1}^k\right)_+}{2\overline{p}}-\frac{\gamma_i \left(p_j^k-p_{j-1}^k\right)_+}{2\overline{p}}\right]\right\}\nonumber\\
&&+n_{i,\ j+1}^k\left\{ \left(1+\tau G_i(p_{j+1}^k)\right)\left[\frac{\gamma_i \left(p_{j+1}^k-p_{j}^k\right)_+}{2\overline{p}}\right]\right\}\nonumber\\
&&+n_{i,\ j-1}^k\left\{ \left(1+\tau G_i(p_{j-1}^k)\right)\left[\frac{\gamma_i \left(p_{j-1}^k-p_{j}^k\right)_+}{2\overline{p}}\right]\right\}.
\end{eqnarray}}
\textcolor{black}{From the balance equation~\eqref{eq:mastereq}}, employing a formal limiting procedure that includes letting $\Delta_x\rightarrow 0$ and $\tau \rightarrow 0$ in such a way that
\beq
\label{ass:formderi}
\frac{\gamma_i \Delta_x^2}{2\tau \overline{p}}\rightarrow \mu_i, \quad \text{where} \quad \mu_i \in \mathbb{R}^*_+,\ i=1, \ldots, I,
\eeq
\textcolor{black}{under the constitutive relation~\eqref{eq:pressure_definition} one formally obtains the PDE system
\begin{equation}
\label{eq:PDEmodelrev}
\begin{cases}
\begin{array}{l}
\displaystyle{\partial_t n_i - \mu_i \, \partial_x \left(n_i \, \partial_x p \right) = G_i(p) \, n_i, \quad i=1, \ldots, I},
\\\\
\displaystyle{p(t,x) := \sum_{i=1}^I \omega_i \, n_i(t,x),}
\end{array}
\quad (t,x) \in (0,\infty) \times \mathbb{R},
\end{cases}
\end{equation}
subject to the following assumptions on the mobility coefficients, $\mu_i$, 
\begin{equation}
\label{ass:modpar3}
0 < \mu_1 < \mu_2 < \ldots < \mu_{I-1} < \mu_I.
\end{equation}
Assumptions~\eqref{ass:modpar3} descend from assumptions~\eqref{eq:gamma_conditions} when conditions~\eqref{ass:formderi} hold. Under the additional assumptions~\eqref{eq:G_conditions}-\eqref{eq:alpha_conditions} the PDE system~\eqref{eq:PDEmodelrev} then reduces to the PDE system~\eqref{eq:PDEmodel}.}

\section{Travelling wave analysis}\label{sec:section4}
 \textcolor{black}{In this section, under assumptions~\eqref{eq:G_conditions}-\eqref{eq:alpha_conditions} and~\eqref{ass:modpar3}, we carry out travelling wave analysis of the continuum model~\eqref{eq:PDEmodel}. 
Substituting the travelling wave ansatz 
$$
n_i(t,x) = n_i(z), \quad z := x-c \, t, \quad c \in \mathbb{R}^*_+,
$$
where $c$ is the travelling wave speed, into the model~\eqref{eq:PDEmodel} yields
\begin{subnumcases}{\label{eq:PDEmodelTW}}
  -c \, n_1' - \mu_1 \, \left(n_1 \, p' \right)' = G_1(p) \, n_1, & \label{eq:PDEmodelTW_1}
   \\
      \nonumber\\
   -c \, n_i' - \mu_i \, \left(n_i \, p' \right)' =  0, \quad i=2, \ldots, I, &  $z \in \mathbb{R}.$  \label{eq:PDEmodelTW_1a}\\
   p(z) := \sum_{i=1}^I \omega_i \, n_i(z), &  \label{eq:PDEmodelTW_2}
\end{subnumcases}
}
We seek travelling wave solutions that satisfy the following conditions
\beq
\label{ass:SuppTW1}
n_i(z) 
\begin{cases}
> 0, \;\; \text{for } z \in (z_{i-1}, z_i)
\\
=0 , \;\;  \text{for } z \not\in (z_{i-1}, z_i),
\end{cases}
\quad i=1, \ldots, I,
\eeq
where
\beq
\label{ass:SuppTW2}
-\infty =: z_0 < z_1 := 0 < z_2 < \ldots < z_{I-1} < z_I < \infty,
\eeq
along with the asymptotic (i.e. boundary) condition
\beq
\label{ass:asycon}
p(-\infty) = \overline{p}.
\eeq
Note that, under conditions~\eqref{ass:SuppTW1}-\eqref{ass:SuppTW2}, conservation of mass ensures that
\beq
\label{eq:masscon}
\int_{z_{i-1}}^{z_i} n_i(z) \, {\rm d}z = M_i, \quad M_i \in \mathbb{R}^*_+, \quad i=2, \ldots, I,
\eeq
where the parameter  $M_i$ represents the number (i.e. the total volume fraction) of cells with phenotype $i=2, \ldots, I$ in the population.\\ 

\begin{remark}
\label{rem:rem2}
Conditions~\eqref{ass:SuppTW1}-\eqref{ass:SuppTW2} correspond to a scenario in which cells with phenotypes labelled by different values of the index $i$ are spatially segregated across invading fronts, which are represented by travelling wave solutions of the continuum model~\eqref{eq:PDEmodel}, i.e. solutions of the system of differential equations~\eqref{eq:PDEmodelTW}. More precisely, also in the light of assumptions~\eqref{eq:G_conditions}-\eqref{eq:alpha_conditions} and~\eqref{ass:modpar3}, cells with phenotype $i=1$ (i.e. fast-dividing cells with the lowest mobility) make up the bulk of the population in the rear of the wave, while cells with phenotypes labelled by increasing values of $i>1$, which display a higher mobility, are found in the regions closer to the invading edge (cf. the schematic in Figure~\ref{fig:Fig2}).
\end{remark}

\begin{figure}[H]
\centering
\includegraphics[width=1\textwidth]{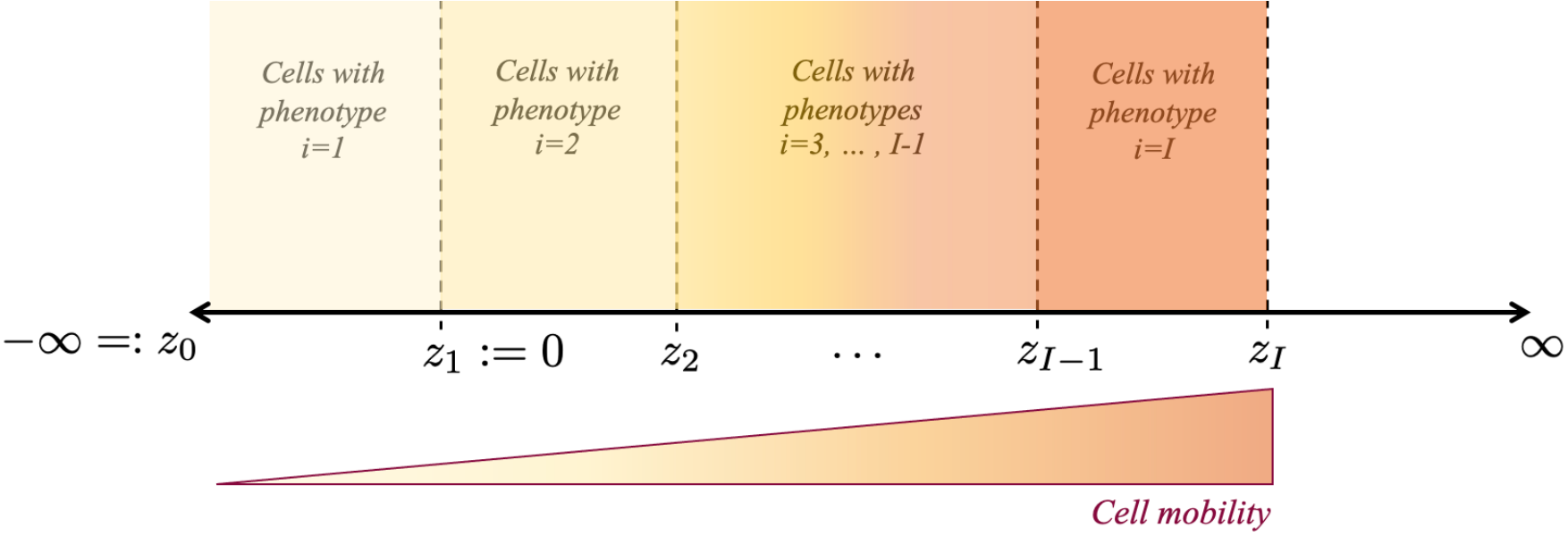}
\caption{{\bf Schematic overview of spatial segregation across travelling waves.} Schematic of how, under assumptions~\eqref{eq:G_conditions}-\eqref{eq:alpha_conditions} and~\eqref{ass:modpar3}, cells with phenotypes labelled by different values of the index $i$ are spatially segregated across invading fronts, which are represented by travelling wave solutions of the continuum model~\eqref{eq:PDEmodel}, i.e. solutions of the system of differential equations~\eqref{eq:PDEmodelTW}, subject to conditions \eqref{ass:SuppTW1}-\eqref{ass:SuppTW2}}
\label{fig:Fig2}
\end{figure} 
The properties of such travelling wave solutions are established by Theorem~\ref{th:theo1}.
\\\\
\begin{theorem}
\label{th:theo1}
Let assumptions~\eqref{eq:G_conditions}-\eqref{eq:alpha_conditions} and \eqref{ass:modpar3} hold. For any $M_2, \ldots, M_I \in \mathbb{R}^*_+$ there exist $z_2, \ldots, z_{I} \in \mathbb{R}^*_+$ and $c \in \mathbb{R}^*_+$ such that the system of differential equations~\eqref{eq:PDEmodelTW} subject to conditions~\eqref{ass:SuppTW1}-\eqref{ass:asycon} admits a solution wherein each component $n_i(z)$ is positive, continuous, and decreasing on $(z_{i-1}, z_i)$ for $i=1, \ldots, I$, and the components $n_{2}(z), \ldots, n_{I}(z)$ satisfy conditions~\eqref{eq:masscon} as well. Moreover, the function $p(z)$ defined via the constitutive relation~\eqref{eq:PDEmodelTW_2} is positive, continuous, and decreasing on $(-\infty,z_I)$, with
\beq
\label{eq:pat0}
p(0) =  \sqrt{2 \, c \, \sum_{j=2}^I \dfrac{\omega_j}{\mu_j} \, M_j},
\eeq
and it has a kink at the points  $0, z_2, \ldots, z_{I-2}, z_{I-1}$ with 
\beq
\label{eq:kinkPns}
-\mu_{2} \, p'(0^+) = -\mu_{1} \, p'(0^-) = c,  \;\; \text{ and } \;\; -\mu_{i+1} \, p'(z_i^+) = -\mu_{i} \, p'(z_i^-) = c, \; i=2,\ldots, I-1.
\eeq
\end{theorem}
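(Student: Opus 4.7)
The spatial segregation ansatz~\eqref{ass:SuppTW1}-\eqref{ass:SuppTW2} decouples the travelling wave problem into $I$ subproblems, one per interval $(z_{i-1},z_i)$, linked only through the continuity of $p$: on each interval exactly one phenotype is present, so~\eqref{eq:PDEmodelTW_2} reduces to $p=\omega_i\,n_i$ there, and the unknowns effectively become the single function $p(z)$ on $(-\infty,z_I)$, together with the $I-1$ interface positions $z_2,\ldots,z_I$ and the wave speed $c$. My strategy is to (i) solve explicitly on the ``passive'' intervals $i\geq 2$ (where $G_i\equiv 0$), (ii) reduce the ``active'' interval $i=1$ to a phase-plane/shooting problem adapting the single-phenotype analysis of~\cite{lorenzi2017interfaces}, and (iii) close the system through an implicit matching equation for $c$.

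For $i\geq 2$, equation~\eqref{eq:PDEmodelTW_1a} rewrites on $(z_{i-1},z_i)$ as $(c\,n_i+\mu_i\,n_i\,p')'=0$, so the flux $c\,n_i+\mu_i\,n_i\,p'$ is constant there; a Rankine-Hugoniot argument at the free boundaries, where this flux must match the zero value carried by the identically-vanishing solution outside the support, forces that constant to be zero. Hence $p'(z)=-c/\mu_i$ on $(z_{i-1},z_i)$, which immediately delivers the kink identities~\eqref{eq:kinkPns} and a continuous piecewise-affine profile for $p$ on $[0,z_I]$. Since $n_i=p/\omega_i$ is then itself affine and $p(z_I)=0$ by continuity with the trivial region $\{z>z_I\}$, an elementary integration using~\eqref{eq:masscon} gives
\[
p(z_{i-1})^2-p(z_i)^2=\frac{2\,c\,\omega_i\,M_i}{\mu_i},\qquad i=2,\ldots,I,
\]
and telescoping in $i$ produces exactly~\eqref{eq:pat0}.

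On the active interval $(-\infty,0)$, using $n_1=p/\omega_1$, equation~\eqref{eq:PDEmodelTW_1} becomes the autonomous second-order ODE $-c\,p'-\mu_1(p\,p')'=G_1(p)\,p$. Writing it as the first-order system $p'=q$, $\mu_1\,p\,q'=-\mu_1 q^2-c\,q-G_1(p)\,p$, the point $(\overline p,0)$ is an equilibrium; because $G_1(\overline p)=0$ and $G_1'(\overline p)<0$ (by~\eqref{eq:G_conditions}-\eqref{eq:alpha_conditions}), the linearisation has trace $-c/(\mu_1\overline p)<0$ and determinant $G_1'(\overline p)/\mu_1<0$, so this equilibrium is a saddle with a one-dimensional unstable manifold whose physically relevant branch is tangent to a direction pointing into the quadrant $\{p<\overline p,\,q<0\}$. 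Standard vector-field arguments (in particular $q'<0$ on $\{q=0,\,0<p<\overline p\}$ and the coercive term $-\mu_1 q^2$) confirm that the orbit remains in $\{0<p<\overline p,\,q<0\}$ as long as $p>0$, so translating the first hitting time of the target line $\{q=-c/\mu_1\}$ to $z=0$ defines a shooting map $c\mapsto P_0(c):=p(0)$.

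The wave speed is then selected by the matching condition
\[
P_0(c)=\sqrt{2\,c\,\sum_{j=2}^I\frac{\omega_j}{\mu_j}\,M_j},
\]
obtained by equating the shooting value with~\eqref{eq:pat0}; once $c$ is known, the interface positions are recovered from $z_i-z_{i-1}=\mu_i(p(z_{i-1})-p(z_i))/c$. The right-hand side vanishes at $c=0$ and grows like $\sqrt{c}$, whereas $P_0(c)\to\overline p$ as $c\to 0^+$ (the orbit barely leaves the equilibrium before the target line is reached), and a continuity/intermediate-value argument provides at least one admissible $c\in\mathbb{R}_+^*$. I expect the main obstacle to be precisely this last step: showing that the shooting map is globally well-defined for all relevant $c$ (i.e.\ that the unstable-manifold orbit reaches $\{q=-c/\mu_1\}$ before $p$ or $q$ degenerates), and extracting enough monotonicity in $c$ to secure existence and, a fortiori, uniqueness of the wave. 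This is exactly where the single-phenotype treatment in~\cite{lorenzi2017interfaces} needs careful adaptation to the multi-phenotype setting.
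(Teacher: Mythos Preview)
Your approach is essentially the paper's: decouple via the segregation ansatz, solve the passive intervals by flux constancy, extract \eqref{eq:pat0} from mass conservation (your telescoping $p(z_{i-1})^2-p(z_i)^2=2c\,\omega_iM_i/\mu_i$ is in fact neater than the paper's inductive Steps~5--6), and shoot on the active interval with matching at $z=0$. Two remarks. First, the Rankine--Hugoniot argument for $i\geq2$ yields $p'=-c/\mu_i$ on each $(z_{i-1},z_i)$ and hence all of \eqref{eq:kinkPns} \emph{except} $-\mu_1 p'(0^-)=c$; that last piece requires the same jump reasoning applied to $n_1$ at its own free boundary $z=0$ (or, equivalently, the flux-continuity relation $\mu_1 p'(0^-)=\mu_2 p'(0^+)$ that the paper isolates in Step~3), which then justifies your choice of shooting target $\{q=-c/\mu_1\}$. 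Second, for the obstacle you correctly flag at the end, the paper's device is to differentiate the boundary-value problem~\eqref{eq:TW2} on $(-\infty,0)$ with respect to $c$ and compare the resulting linear equation for $\partial p/\partial c$ with the one for $p'$, deducing that $c\mapsto p(0^-)$ is strictly decreasing; since $c\mapsto p(0^+)$ is manifestly increasing from \eqref{eq:pat0}, this delivers existence \emph{and} uniqueness at once and bypasses the global phase-plane tracking you were worried about.
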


\begin{proof}
We prove Theorem~\ref{th:theo1} in 8 steps. In summary, building on the shooting method that we employed in~\cite{lorenzi2017interfaces}, first we prove, for $c \in \mathbb{R}^*_+$ given, that: (i) The cell pressure $p(z)$ and the cell density $n_i(z)$ are positive, continuous, and monotonically decreasing on the intervals $(z_{i-1},z_{i})$ for all $i=1,\ldots,I$. Then $p(z)$ is not only monotonically decreasing on each interval but as a whole in $\mathbb{R}$, with possibly jumps at the points $z_i$ for $i=1,\ldots,I-1$. This along with the non-negativity of $p(z)$ ensures that $0 \leq p(z) \leq \overline{p}$ for all $z \in \mathbb{R}$  ({\it Step 1}). (ii) The cell pressure is continuous also in $z_i$ for all $i=1,\ldots,I$ ({\it Step 2}). (iii) The derivative of the cell pressure satisfies conditions~\eqref{eq:kinkPns} ({\it Steps 3-4}). Then, still for $c \in \mathbb{R}^*_+$ given, imposing conditions~\eqref{eq:masscon}, we find the values attained by the cell pressure $p(z)$ at the points $z_2, \ldots, z_{I-1}$, and we prove that condition~\eqref{eq:pat0} holds ({\it Steps 5-6}). Next, still for $c \in \mathbb{R}^*_+$ given, we identify the points $z_2, \ldots, z_{I}$ ({\it Step 7}). Finally, we prove that there exists a unique pair $(c,p)$ that satisfies the travelling wave problem ({\it Step 8}).
\\\\
{\it Preliminary observations.}  \textcolor{black}{ Throughout the proof we will be exploiting the fact that the system of differential equations~\eqref{eq:PDEmodelTW} subject to conditions~\eqref{ass:SuppTW1}-\eqref{ass:SuppTW2} can be rewritten as 
\begin{subnumcases}{\label{eq:PDEmodelTWred}}
  -c \, n_1' - \mu_1 \, \left(n_1 \, p' \right)' = G_1(p) \, n_1, \quad z \in (z_0, z_1) \equiv (-\infty,0),& \label{eq:PDEmodelTWred_1}
   \\
      \nonumber\\
   -c \, n_i' - \mu_i \, \left(n_i \, p' \right)' =  0, \quad z \in (z_{i-1}, z_i) \quad i=2, \ldots, I,  \label{eq:PDEmodelTWred_1a}\\
   \nonumber\\
   p(z) :=  \omega_i \, n_i(z), \quad z \in (z_{i-1}, z_i), \quad i=1,\ldots,I. &  \label{eq:PDEmodelTWred_2}
\end{subnumcases}
}
 \textcolor{black}{Moreover, multiplying both sides of the differential equation~\eqref{eq:PDEmodelTWred_1} by $\omega_{1}$ and both sides of the differential equation~\eqref{eq:PDEmodelTWred_1a} by $\omega_{i}$, using the relation between $p$ and $n_1$ and $p$ and $n_i$ given by~\eqref{eq:PDEmodelTWred_2} along with the fact that $G_1(p):= \alpha_1 \, G(p)$ where $\alpha_1 > 0$ (cf. assumptions~\eqref{eq:G_conditions}-\eqref{eq:alpha_conditions}), we obtain the following set of differential equations for $p$ }
 \textcolor{black}{ 
\begin{subnumcases}{\label{e.P-inf0FB12popi}}
  -c \, p' - \mu_1 \, \left(p \, p' \right)' = \alpha_1 \, G(p) \, p, \quad z \in (z_0, z_1) \equiv (-\infty,0),& \label{e.P-inf0FB12popi_1}
   \\
      \nonumber\\
   -c \, p' - \mu_i \, \left(p \, p' \right)' =  0, \quad z \in (z_{i-1}, z_i) \quad i=2, \ldots, I.  \label{e.P-inf0FB12popi_2}
\end{subnumcases}
}
 \textcolor{black}{Furthermore, we also notice that, introducing the notation
\beq
\label{defmu}
\mu(z) := \sum_{i=1}^I \mu_i \, \mathbbm{1}_{(z_{i-1},z_{i})}(z), 
\eeq
where $\mathbbm{1}_{(\cdot)}(z)$ is the indicator function of the set $(\cdot)$, and using the fact that, under conditions~\eqref{ass:SuppTW1}-\eqref{ass:SuppTW2}, both $p(z) = 0$ and $p'(z) = 0$ for all $z >z_I$, the set of differential equations~\eqref{e.P-inf0FB12popi_2} can be rewritten in a more compact form as
\beq
\label{eqpcompzminz1}
-c \, p'(z) - \left(\mu(z) \, p(z) \, p'(z) \right)' = 0, \quad z \in (z_1,\infty) \equiv (0,\infty).
\eeq
}
\\
\noindent {\it Step 1.} \textcolor{black}{For $c \in \mathbb{R}^*_+$ given, under assumptions~\eqref{eq:G_conditions} on the function $G(p)$, the differential equation~\eqref{e.P-inf0FB12popi_1} subject to the condition~\eqref{ass:asycon} admits solutions which are positive, continuous and, by the maximum principle, monotonically decreasing on $(z_0,z_1) \equiv (-\infty,0)$. Moreover, still for $c \in \mathbb{R}^*_+$ given, integrating the differential equation~\eqref{eqpcompzminz1} between $z \in (z_{i-1},z_i)$ with $i=2, \ldots, I$ and $\infty$, and using the fact that both $p(z) \to 0$ and $p'(z) \to 0$ as $z \to \infty$, yields}
\beq
\label{e.Pon0rFB22a}
\textcolor{black}{p'(z) = -\dfrac{c}{\mu_i}  \quad z \in (z_{i-1},z_i), \quad  i=2, \ldots, I,}
\eeq
\textcolor{black}{from which we see that the differential equation~\eqref{eqpcompzminz1} admits solutions which are positive, continuous, and monotonically decreasing on $(z_{i-1},z_{i})$ for all $i=2,\ldots,I$. In particular, note that integrating the differential equation~\eqref{e.Pon0rFB22a} with $i=I$ between a generic point $z \in (z_{I-1},z_I)$ and $z_I$ and imposing the condition $p(z_I)=0$ (cf. the conditions~\eqref{ass:SuppTW1}-\eqref{ass:SuppTW2}) we obtain
\beq
\label{e:PeqFB}
p(z) = \dfrac{c}{\mu_I} \, (z_I - z), \quad z \in (z_{I-1},z_I).
\eeq
}

\textcolor{black}{In summary, $p(z)$ is positive, continuous, and monotonically decreasing on $(z_{i-1}, z_i)$ for all $i=1, \ldots, I$. This along with the relations~\eqref{eq:PDEmodelTWred_2} allow us to conclude that also $n_i(z)$ is positive, continuous, and monotonically decreasing on $(z_{i-1}, z_i)$ for all $i=1, \ldots, I$.}
\\\

\noindent {\it Step 2.} For $c \in \mathbb{R}^*_+$ given, under assumptions~\eqref{eq:G_conditions}-\eqref{eq:alpha_conditions},  \textcolor{black}{introducing the additional notation
\beq
\label{defalpha}
\alpha(z) := \sum_{i=1}^I \alpha_i \, \mathbbm{1}_{(z_{i-1},z_{i})}(z),
\eeq
and using the definition~\eqref{defmu} of $\mu(z)$ along with the fact that, under conditions~\eqref{ass:SuppTW1}-\eqref{ass:SuppTW2}, both $p(z) = 0$ and $p'(z) = 0$ for all $z >z_I$, we further} rewrite the set of differential equations~\eqref{e.P-inf0FB12popi} in a more compact form as
\beq
\label{eqpcomp}
-c \, p'(z) - \left(\mu(z) \, p(z) \, p'(z) \right)' = \alpha(z) \, G(p) \, p(z), \quad z \in \mathbb{R}.
\eeq
Multiplying by $p$ both sides of the differential equation~\eqref{eqpcomp} and rearranging terms we find 
\beq
\label{eqpcompbyp}
\mu \, p \left(p' \right)^2 = \alpha \, G(p) \, p^2 + \left(p \, \mu \, p \,  p' \right)' + c \, p \, p', \quad z \in \mathbb{R}.
\eeq
Since the function $p(z)$ is continuous on $(z_{j-1},z_j)$ for all $j=1, \ldots, I-1$, there exists $z_{j}^* \in (z_{j-1},z_{j})$ such that $p'(z_{j}^*) > -\infty$ for any $j=1, \ldots, I-1$. Hence, integrating both sides of~\eqref{eqpcompbyp} between $z^*_j$ and $z_{j+1}$ and estimating the right-hand side from above, by using the fact that the functions $p$, $\mu$, $\alpha$\textcolor{black}{, and $G(p)$} are non-negative and bounded on $(z_{j-1}, z_{j+1})$ while the function $p'$ is non-positive on $(z_{j-1}, z_{j+1})$, we obtain
$$
\int_{z^*_j}^{z_{j+1}}  \mu \, p \, \left(p' \right)^2 {\rm d}z  < \infty, \quad\quad j=1, \ldots, I-1.
$$
The above estimates ensure that $p' \in L^2_{loc}\left((z^*_j, z_{j+1})\right)$ for $j=1, \ldots, I-1$. This along with the fact that $p \in L^{\infty}(\mathbb{R})$ allow us to conclude that $p$ is also continuous in each $z_i$ for $i=1, \ldots, I-1$, i.e.
\beq
\label{eq:contP}
p(z_{i}^+) = p(z_{i}^-) = p(z_i), \quad i=1,\ldots,I-1.
\eeq
\\
\noindent {\it Step 3.} For $c \in \mathbb{R}^*_+$ given, integrating the differential equation~\eqref{eqpcomp} between a generic point $z \in (z_{j-1},z_{j+1})$ and $z_{j+1}$ for $j=1,\ldots,I-1$, and using the fact that $p(z)$ is continuous and $\mu(z_{j+1}^-)=\mu_{j+1}$ (cf. the definition~\eqref{defmu} of $\mu(z)$), yields
\beq
\label{neqrev0}
c \, p(z) + \mu(z) \, p(z) \, p'(z) =  \int_{z}^{z_{j+1}} \alpha \, G(p) \, p \, {\rm d}\zeta + \, c \, p(z_{j+1}) + \mu_{j+1} \, p(z_{j+1}) \, p'(z_{j+1}^-).
\eeq

Letting $z \to z_j^-$ in \eqref{neqrev0} and using the fact that $p(z)$ is continuous and $\mu(z_{j}^-)=\mu_{j}$ (cf. the definition~\eqref{defmu} of $\mu(z)$) gives
\beq
\label{neqrev1}
c \, p(z_j) + \mu_j \, p(z_j) \, p'(z_j^-) =  \int_{z_j}^{z_{j+1}} \alpha \, G(p) \, p \, {\rm d}z + \, c \, p(z_{j+1}) + \mu_{j+1} \, p(z_{j+1}) \, p'(z_{j+1}^-).
\eeq
Similarly, letting $z \to z_j^+$ in \eqref{neqrev0} and using the fact that $p(z)$ is continuous and $\mu(z_{j}^+)=\mu_{j+1}$ (cf. the definition~\eqref{defmu} of $\mu(z)$) gives
\beq
\label{neqrev2}
c \, p(z_j) + \mu_{j+1} \, p(z_j) \, p'(z_j^+) =  \int_{z_j}^{z_{j+1}} \alpha \, G(p) \, p \, {\rm d}z + \, c \, p(z_{j+1}) + \mu_{j+1} \, p(z_{j+1}) \, p'(z_{j+1}^-).
\eeq
Combining~\eqref{neqrev1} and~\eqref{neqrev2} we obtain
\beq
\label{eq:jumpnew2}
\mu_{i+1} \, p'(z_i^+) = \mu_{i} \, p'(z_i^-), \quad i=1,\ldots, I-1.
\eeq
\\

\noindent {\it Step 4.} For $c \in \mathbb{R}^*_+$ given, combining~\eqref{e.Pon0rFB22a} with $i=I$ (i.e. the fact that $c = - \mu_I \, p'(z_{I-1}^+)$) and the condition~\eqref{eq:jumpnew2} with $i=I-1$ (i.e. the fact that $\mu_{I-1} \, p'(z_{I-1}^-)=\mu_I \, p'(z_{I-1}^+)$) we obtain 
\beq
\label{eq:jumpnew2Fm}
- \mu_{I-1} \, p'(z_{I-1}^-) = - \mu_I \, p'(z_{I-1}^+) = c.
\eeq
Moreover, integrating the differential equation~\eqref{eqpcompzminz1} between $z_{i}$ and $z_{i+1}$ for $i=1,\ldots,I-2$ and using the fact that $p(z)$ is continuous and $\mu(z_{i+1}^-)=\mu(z_{i}^+)=\mu_{i+1}$  (cf. the definition~\eqref{defmu} of $\mu(z)$) yields 
$$
- c \left(p(z_{i+1}) - p(z_{i})\right) - \mu_{i+1} \left(p(z_{i+1}) \, p'(z_{i+1}^-) - p(z_{i}) \, p'(z_{i}^+) \right) = 0, \quad i=1,\ldots,I-2,
$$
from which, rearranging terms, we find
\beq
\label{eq:jumpnew2Fma}
p(z_{i}) \, \left(c + \mu_{i+1} p'(z_{i}^+)\right) -p(z_{i+1}) \, \left(c + \mu_{i+1} \, p'(z_{i+1}^-) \right) = 0, \quad i=1,\ldots,I-2.
\eeq
Choosing $i=I-2$ in~\eqref{eq:jumpnew2Fma} gives
$$
p(z_{I-2}) \, \left(c + \mu_{I-1} p'(z_{I-2}^+)\right) -p(z_{I-1}) \, \left(c + \mu_{I-1} \, p'(z_{I-1}^-) \right) = 0
$$
and then substituting~\eqref{eq:jumpnew2Fm} into the above equation yields
$$
p(z_{I-2}) \, \left(c + \mu_{I-1} \, p'(z_{I-2}^+)\right) = 0 \quad \Longrightarrow \quad - \mu_{I-1} \, p'(z_{I-2}^+) = c,
$$
from which, using condition~\eqref{eq:jumpnew2} with $i=I-2$ (i.e. the fact that $\mu_{I-1} \, p'(z_{I-2}^+) = \mu_{I-2} \, p'(z_{I-2}^-)$), we obtain
\beq
\label{eq:jumpnew2Fm1}
-\mu_{I-2} \, p'(z_{I-2}^-) = -\mu_{I-1} \, p'(z_{I-2}^+) = c.
\eeq

Proceeding in a similar way for $i=I-3$ and so on and so forth for $i=I-4, \ldots, 1$ we also find 
\beq
\label{eq:jumpnew2Fm2}
- \mu_{i} \, p'(z_{i}^-) = - \mu_{i+1} \, p'(z_{i}^+) = c, \quad i=1,\ldots,I-3.
\eeq
Taken together, the results given by~\eqref{eq:jumpnew2Fm}, \eqref{eq:jumpnew2Fm1}, and~\eqref{eq:jumpnew2Fm2} allow us to conclude that 
\beq
\label{eq:jumpnew2Fm1aa}
-\mu_{i+1} \, p'(z_i^+) = -\mu_{i} \, p'(z_i^-) = c, \quad i=1,\ldots, I-1,
\eeq
and thus conditions~\eqref{eq:kinkPns} hold. 
\\\\
{\it Step 5.} For $c \in \mathbb{R}^*_+$ given, the expression~\eqref{e:PeqFB} for $p$ on $(z_{I-1},z_I)$ along with the constitutive relation~\eqref{eq:PDEmodelTWred_2} with $i=I$ yield
\beq
\label{e.Pon0rFB12fmnew}
n_{I}(z) = \dfrac{c}{\omega_I \, \mu_I} (z_I - z), \quad z \in (z_{I-1},z_I).
\eeq
Substituting~\eqref{e.Pon0rFB12fmnew} into~\eqref{eq:masscon} with $i=I$, \textcolor{black}{and taking the positive root so as to ensure that the condition $z_{I}-z_{I-1}>0$ (i.e. $z_{I} > z_{I-1}$) holds}, gives
\beq
\label{e:zI}
\dfrac{c}{\omega_I  \, \mu_I} \int_{z_{I-1}}^{z_I} (z_I - z) \, {\rm d}z = M_I \quad \Longrightarrow \quad z_I = z_{I-1} + \sqrt{\dfrac{2 \, \omega_I \, \mu_I}{c} \, M_I}.
\eeq
Finally, substituting  the expression~\eqref{e:zI} for $z_I$ into~\eqref{e:PeqFB} and evaluating the resulting expression for $p(z)$ in $z=z_{I-1}$ yields
$$
p(z_{I-1}^+) = \sqrt{\dfrac{2 \, c \, \omega_I}{\mu_I} \, M_I},
$$
from which, exploiting the fact that $p(z)$ is continuous, we obtain
\beq
\label{e.Pon0rFB12fm}
p(z_{I-1}) = \sqrt{\dfrac{2 \, c \, \omega_I}{\mu_I} \, M_I}.
\eeq 
\\
\noindent {\it Step 6.} For $c \in \mathbb{R}^*_+$ given, integrating the differential equation~\eqref{e.Pon0rFB22a} between a generic point $z \in (z_{i-1},z_i)$ and $z_i$ for $i=2,\ldots,I-1$, we find 
\beq
\label{eqnew1}
p(z) = p(z_i) + \dfrac{c}{\mu_i} (z_i - z), \quad z \in (z_{i-1}, z_i), \quad i=2,\ldots,I-1.
\eeq
Choosing $i=I-1$ in~\eqref{eqnew1} gives 
\beq
\label{eqnew2}
p(z) = p(z_{I-1}) + \dfrac{c}{\mu_{I-1}} (z_{I-1} - z), \quad z \in (z_{I-2}, z_{I-1}). 
\eeq
Integrating~\eqref{eqnew2} between $z_{I-2}$ and $z_{I-1}$, using the fact that $p(z) = \omega_{I-1} n_{I-1}(z)$ for $z \in (z_{I-2}, z_{I-1})$ (cf. the constitutive relation~\eqref{eq:PDEmodelTWred_2} with $i=I-1$), and imposing the condition~\eqref{eq:masscon} with $i=I-1$ gives
$$
\int_{z_{I-2}}^{z_{I-1}} \left(p(z_{I-1}) + \dfrac{c}{\mu_{I-1}} (z_{I-1} - z) \right) \, {\rm d}z = \omega_{I-1} \, M_{I-1},
$$
from which, computing the integral, solving the resulting quadratic equation for $z_{I-1}-z_{I-2}$, and taking the positive root so as to ensure that the condition $z_{I-1}-z_{I-2}>0$ (i.e. $z_{I-1} > z_{I-2}$) holds, we find   
\beq
\label{eqnew3}
z_{I-1} - z_{I-2} = \sqrt{\left(\dfrac{\mu_{I-1}}{c}\right)^2 (p(z_{I-1}))^2 + 2 \, \dfrac{\mu_{I-1}}{c} \, \omega_{I-1} \, M_{I-1}} - \dfrac{\mu_{I-1}}{c} \, p(z_{I-1}).
\eeq
Moreover, evaluating~\eqref{eqnew2} in $z_{I-2}$ and substituting~\eqref{eqnew3} into the resulting equation yields
\begin{eqnarray*}
p(z_{I-2}) &=& \dfrac{c}{\mu_{I-1}} \, \sqrt{\left(\dfrac{\mu_{I-1}}{c}\right)^2 (p(z_{I-1}))^2 + 2 \, \dfrac{\mu_{I-1}}{c} \, \omega_{I-1} \, M_{I-1}} 
\\
&=& \sqrt{(p(z_{I-1}))^2 + \dfrac{2 \, c \, \omega_{I-1}}{\mu_{I-1}} \, \, M_{I-1}}.
\end{eqnarray*}
Finally, substituting the expression~\eqref{e.Pon0rFB12fm} for $p(z_{I-1})$ into the above equation we obtain
\beq
\label{eqnew4}
p(z_{I-2}) = \sqrt{\dfrac{2 \, c \, \omega_I}{\mu_I} \, M_I + \dfrac{2 \, c \, \omega_{I-1}}{\mu_{I-1}} \, \, M_{I-1}}.
\eeq

Proceeding in a similar way for $i=I-2$ and so on and so forth for $i=I-3, \ldots, 2$ we also find 
\beq
\label{eqnew5b}
p(z_i) = \sqrt{2 \, c \, \sum_{j=i+1}^I \dfrac{\omega_j}{\mu_j} \, M_j}, \quad i=2,\ldots,I-2.
\eeq
Taken together, the results given by~\eqref{e.Pon0rFB12fm}, \eqref{eqnew4}, and~\eqref{eqnew5b} allow us to conclude that 
\beq
\label{eqnew5}
p(z_i) = \sqrt{2 \, c \, \sum_{j=i+1}^I \dfrac{\omega_j}{\mu_j} \, M_j}, \quad i=1,\ldots,I-1.
\eeq
Choosing $i=1$ in~\eqref{eqnew5} and recalling that $z_1:=0$ (cf. conditions~\eqref{ass:SuppTW2}), we obtain 
\beq
\label{eqnew6}
p(0) = \sqrt{2 \, c \, \sum_{j=2}^I \dfrac{\omega_j}{\mu_j} \, M_j},
\eeq
which implies that condition~\eqref{eq:pat0} holds. 
\\\\
\noindent {\it Step 7.} Evaluating~\eqref{eqnew1} in $z_{i-1}$ and solving for $z_{i}$ yields
\beq
\label{eqnew7}
z_i = z_{i-1} + \dfrac{\mu_i}{c} \left(p(z_{i-1}) - p(z_i)\right), \quad i=2, \ldots, I-1.
\eeq
Choosing $i=2$ in~\eqref{eqnew7} and recalling that $z_1:=0$ (cf. conditions~\eqref{ass:SuppTW2}) gives
$$
z_2 = \dfrac{\mu_2}{c} \left(p(0) - p(z_2)\right)
$$
and then, substituting into the above equation the expression~\eqref{eqnew6} for $p(0)$ and the expression for $p(z_2)$ obtained by choosing $i=2$ in~\eqref{eqnew5}, we find
\beq
\label{eq:revrev1}
\textcolor{black}{z_2 =  \dfrac{\mu_2}{c} \left(\sqrt{2 \, c \, \sum_{j=2}^I \dfrac{\omega_j}{\mu_j} \, M_j} - \sqrt{2 \, c \, \sum_{j=3}^I \dfrac{\omega_j}{\mu_j} \, M_j}\ \right).} 
\eeq
\textcolor{black}{Moreover, proceeding in a similar way for $i=3$ and so on and so forth for $i=4, \ldots, I-1$ we also obtain}
\beq
\label{eq:revrev2}
\textcolor{black}{z_i = z_{i-1} + \dfrac{\mu_i}{c} \left(\sqrt{2 \, c \, \sum_{j=i}^I \dfrac{\omega_j}{\mu_j} \, M_j} - \sqrt{2 \, c \, \sum_{j=i+1}^I \dfrac{\omega_j}{\mu_j} \, M_j}\ \right), \quad i=3, \ldots, I-1.}
\eeq
\textcolor{black}{Finally, from~\eqref{e:zI} we find }
\beq
\label{eq:revrev3}
\textcolor{black}{z_I = z_{I-1} + \sqrt{\dfrac{2 \, \omega_I \, \mu_I}{c} \, M_I},}
\eeq
\textcolor{black}{with $z_{I-1}$ obtained from~\eqref{eq:revrev2} by choosing $i=I-1$.}
\\\\
\noindent {\it Step 8.} \textcolor{black}{Complementing the differential equation~\eqref{e.P-inf0FB12popi_1} with the condition~\eqref{ass:asycon} and the condition~\eqref{eq:jumpnew2Fm1aa} for $i=1$}, we obtain the following problem
\begin{subnumcases}{\label{eq:TW2}}
-c \, p'(z) - \mu_1 \, \left(p(z) \, p'(z) \right)' = \alpha_1 \, G(p) \, p(z), \quad z \in (z_0, z_1) \equiv (-\infty,0), & \label{eq:TW2_1}
   \\
   \nonumber\\
p(-\infty) = \overline{p}, \quad p'(0^-) = -\dfrac{c}{\mu_{1}}, \label{eq:TW2_2}
\end{subnumcases}
from which, proceeding as similarly done in~\cite{lorenzi2017interfaces}, it is possible to prove \textcolor{black}{(see Appendix~\ref{app:appendixC})} that $c \mapsto p(0^-)$ is monotonically decreasing. On the other hand, the expression~\eqref{eqnew6} for $p(0)$ implies that $c \mapsto p(0^+)$  is monotonically increasing. These facts along with the condition $p(0^-)=p(0^+)$, which follows from the continuity of $p(z)$, allow us to conclude that there exists a unique pair $(c,p)$ that satisfies the travelling wave problem. 
\end{proof}

\begin{remark}
\label{rem:remth1}
The fact that, as established by Theorem~\ref{th:theo1}, the cell pressure $p(z)$ defined via the constitutive relation~\eqref{eq:PDEmodelTW_2} is continuous throughout the support of the travelling wave gives the following interface conditions for the cell densities
\beq
\label{eq:sgndiffn}
n_{i+1}(z_{i}^+) = \dfrac{\omega_{i}}{\omega_{i+1}} n_{i}(z_{i}^-), \quad i=1, \ldots, I-1.
\eeq
These conditions imply that if $\omega_{i}<\omega_{i+1}$ then $n_{i+1}(z_{i}^+)<n_{i}(z_{i}^-)$, whereas if $\omega_{i} \geq \omega_{i+1}$ then $n_{i+1}(z_{i}^+) \geq n_{i}(z_{i}^-)$.
\end{remark}
\bigskip
\begin{remark}
\label{rem:remth12}
The conditions~\eqref{eq:kinkPns} imply that
$$
-p'(z_i^+) = - \dfrac{\mu_i}{\mu_{i+1}} \, p'(z_i^-), \quad i=1, \ldots, I-1
$$
and, therefore, since $p'(z_i^+)<0$ and $p'(z_i^-)<0$ for all $i=1, \ldots, I-1$, we have
$$
|p'(z_i^+)| = \dfrac{\mu_i}{\mu_{i+1}} \, |p'(z_i^-)|, \quad i=1, \ldots, I-1.
$$
Under assumptions~\eqref{ass:modpar3}, which imply that $ \dfrac{\mu_i}{\mu_{i+1}}<1$ for all $i=1, \ldots, I-1$, the above relations allow us to conclude that
$$
|p'(z_i^+)| < |p'(z_i^-)|, \quad i=1, \ldots, I-1.
$$
\end{remark}

\section{Numerical simulations}\label{sec:section5}
In this section, we present results of numerical simulations of the individual-based model introduced in Section~\ref{sec:section2} and the corresponding continuum model defined by the PDE system~\eqref{eq:PDEmodel}, and compare them to the results of travelling wave analysis obtained in Section~\ref{sec:section4}. In particular, we investigate the cases where there are either three or four different cellular phenotypes (i.e. $I=3$ or $I=4$). 

\subsection{Set-up of numerical simulations}\label{sec:section5p1}
We carry out numerical simulations over the spatial domain $[0,L]$, with $L=150$. In order to ensure that assumptions~\eqref{eq:G_conditions} are satisfied, we use the following definition of the function $G(p)$, both when $I=3$ and when $I=4$,
\begin{equation}
G(p):=\arctan\left(\dfrac{1}{10}\left(1-\frac{p}{\overline{p}}\right)\right).
\label{eq:DefGrowth}
\end{equation}
Furthermore, we choose values of the parameters $\alpha_i$ and $\mu_i$ satisfying assumptions~\eqref{eq:alpha_conditions} and~\eqref{ass:modpar3}. Specifically, in the case where $I=3$ we use the following baseline parameter values 
\begin{eqnarray}
\label{eq:parametersI=3}
&&\alpha_1=10,\quad \alpha_2=\alpha_3=0,\nonumber\\
&& \mu_1=10^{-4},\quad \mu_2=2\times10^{-4},\quad \mu_3=3\times10^{-4},\\
&& \omega_1=1,\quad \omega_2=2,\quad \omega_3=3,\nonumber
\end{eqnarray}
and in the case where $I=4$ we complement the parameter choice given by~\eqref{eq:parametersI=3} with 
\begin{eqnarray}
\label{eq:parametersI=4}
\alpha_4=0, \quad \mu_4=4\times10^{-4}, \quad \omega_4=4.
\end{eqnarray}
In both cases, we explore also deviations of the weights $\omega_i$ from these values in the numerical simulations.  

Moreover, in line with the travelling wave analysis carried out in Section~\ref{sec:section4}, \textcolor{black}{we investigate propagation of segregation properties by letting cells} with different phenotypes be spatially segregated at $t=0$. Specifically, both for the individual-based model and for the continuum model, when $I=3$ we define the initial cell densities as
\begin{eqnarray}
\label{eq:ICI=3}
n_1(0,x) &=& A_1 \exp\left[-B \, x^2\right] \mathbbm{1}_{[0,10)}(x),\nonumber \\
n_2(0,x) &=& A_2 \exp\left[-B \, (x-10)^2\right] \mathbbm{1}_{[10,20)}(x),\nonumber \\
n_3(0,x) &=& A_3 \exp\left[-B \, (x-20)^2\right] \mathbbm{1}_{[20,L)}(x),
\end{eqnarray}
while when $I=4$ we use the following definitions for the initial cell densities
\begin{eqnarray}
\label{eq:ICI=4}
n_1(0,x) &=& A_1 \exp\left[-B \, x^2\right] \mathbbm{1}_{[0,10)}(x),\nonumber \\
n_2(0,x) &=& A_2 \exp\left[-B \, (x-10)^2\right] \mathbbm{1}_{[10,20)}(x),\nonumber \\
n_3(0,x) &=& A_3 \exp\left[-B \, (x-20)^2\right] \mathbbm{1}_{[20,30)}(x),\nonumber \\
n_4(0,x) &=& A_4 \exp\left[-B \, (x-30)^2\right] \mathbbm{1}_{[30,L)}(x).
\end{eqnarray}
In~\eqref{eq:ICI=3} and~\eqref{eq:ICI=4}, the function $\mathbbm{1}_{(\cdot)}(x)$ is the indicator function of the set $(\cdot)$, $B=6\times10^{-2}$, and the parameters $A_i$ are positive real numbers. We choose the homeostatic pressure to be
$$
\overline{p} = 4\times10^4
$$
and, given this value of $\overline{p}$ and the values selected for the weights $\omega_i$ in~\eqref{eq:PDEmodel_3} and~\eqref{eq:pressure_definition}, we then choose the values of the parameters $A_i$ in~\eqref{eq:ICI=3} and~\eqref{eq:ICI=4} such that the initial cell pressure satisfies $p(0,x)\leq \overline{p}$ for all $x \in [0,L]$ and is consistent between numerical simulations. 

Finally, to carry out numerical simulations of the individual-based model, we choose the space-step $\Delta_x=0.1$, the time-step $\tau=1\times10^{-4}$, and we define
\beq
\label{def:gammasim}
\gamma_i := \frac{2\tau \overline{p}}{\Delta_x^2} \mu_i
\eeq
so as to ensure that conditions~\eqref{ass:formderi} underlying the formal derivation of the continuum model are met. Note that, since we choose values of the parameters $\mu_i$  satisfying assumptions~\eqref{ass:modpar3}, defining the values of the parameter $\gamma_i$ according to~\eqref{def:gammasim} ensures that assumptions~\eqref{eq:gamma_conditions} are satisfied as well. 

\subsection{Computational implementation of the individual-based model and numerical scheme for the continuum model}\label{sec:section5p2}
All simulations are performed in {\sc{Matlab}}. For the individual-based model, at each time-step, every individual cell can undergo: (i) movement, according to the probabilities defined in Section~\ref{sec:section2p2}; (ii) division and death, according to the probabilities defined in Section~\ref{sec:section2p1}. For each of these processes a random number is drawn from the standard uniform distribution on the interval $(0,1)$ using the built-in {\sc{Matlab}} function {\sc{rand}}. If this random number is smaller than the probability of the event occurring then the process is successful. To impose zero-flux boundary conditions, any attempted move outside the spatial domain is aborted. To numerically solve the PDE system~\eqref{eq:PDEmodel} subject to zero-flux boundary conditions we use a finite volume scheme modified from our previous works~\cite{bubba2020discrete,lorenzi2023derivation}. The full details of this scheme are provided in Appendix~\ref{app:appendixB}.

\subsection{Main results of numerical simulations}\label{sec:section5p4}
Figures~\ref{fig:Fig3} and~\ref{fig:Fig4} display the results of numerical simulations of the individual-based model (bottom panels) and the corresponding continuum model defined by the PDE system~\eqref{eq:PDEmodel} (top panels) for $I=3$ and $I=4$, respectively, under the baseline parameter settings~\eqref{eq:parametersI=3} and~\eqref{eq:parametersI=3}-\eqref{eq:parametersI=4}, respectively. Moreover, Figures~\ref{fig:Fig5} and~\ref{fig:Fig6} display the results of numerical simulations of the  continuum model for $I=3$ and $I=4$, respectively, when the values of the parameters $\alpha_i$ and $\mu_i$ are set according to~\eqref{eq:parametersI=3} and \eqref{eq:parametersI=3}-\eqref{eq:parametersI=4}, respectively, while different combinations of the weights $\omega_i$ are considered. 

\paragraph{Agreement between the individual-based and the continuum models} The results summarised by the plots in Figures~\ref{fig:Fig3} and~\ref{fig:Fig4} demonstrate that overall there is excellent agreement between numerical simulations of the individual-based model and numerical solutions of the corresponding continuum model, as expected since conditions~\eqref{ass:formderi} are met. Moreover, although, for the sake of clarity, only the results of numerical simulations of the continuum model are displayed in Figures~\ref{fig:Fig5} and~\ref{fig:Fig6}, we verified that also for the parameter settings corresponding to these figures the results of numerical simulations of the individual-based model agree with those of the continuum model (results not shown). \textcolor{black}{We stress that possible quantitative discrepancies between the two models emerge in the proximity of the interfaces between regions occupied by cells with different phenotypes (see Figure~\ref{fig:Fig34err}). This is because in these areas there is a stronger interplay between demographic stochasticity and sharp transitions in cell densities, which causes a reduction in the quality of the continuum approximations that are employed in the formal derivation of the PDE model from the underlying individual-based model. We also remark that the quantitative agreement between the two models may deteriorate in scenarios where a limited number of cells is considered, since in these scenarios the impact of demographic stochasticity is amplified.}

\begin{figure}[H]
\includegraphics[width=\textwidth,trim={4cm 1cm 4cm 0cm},clip]{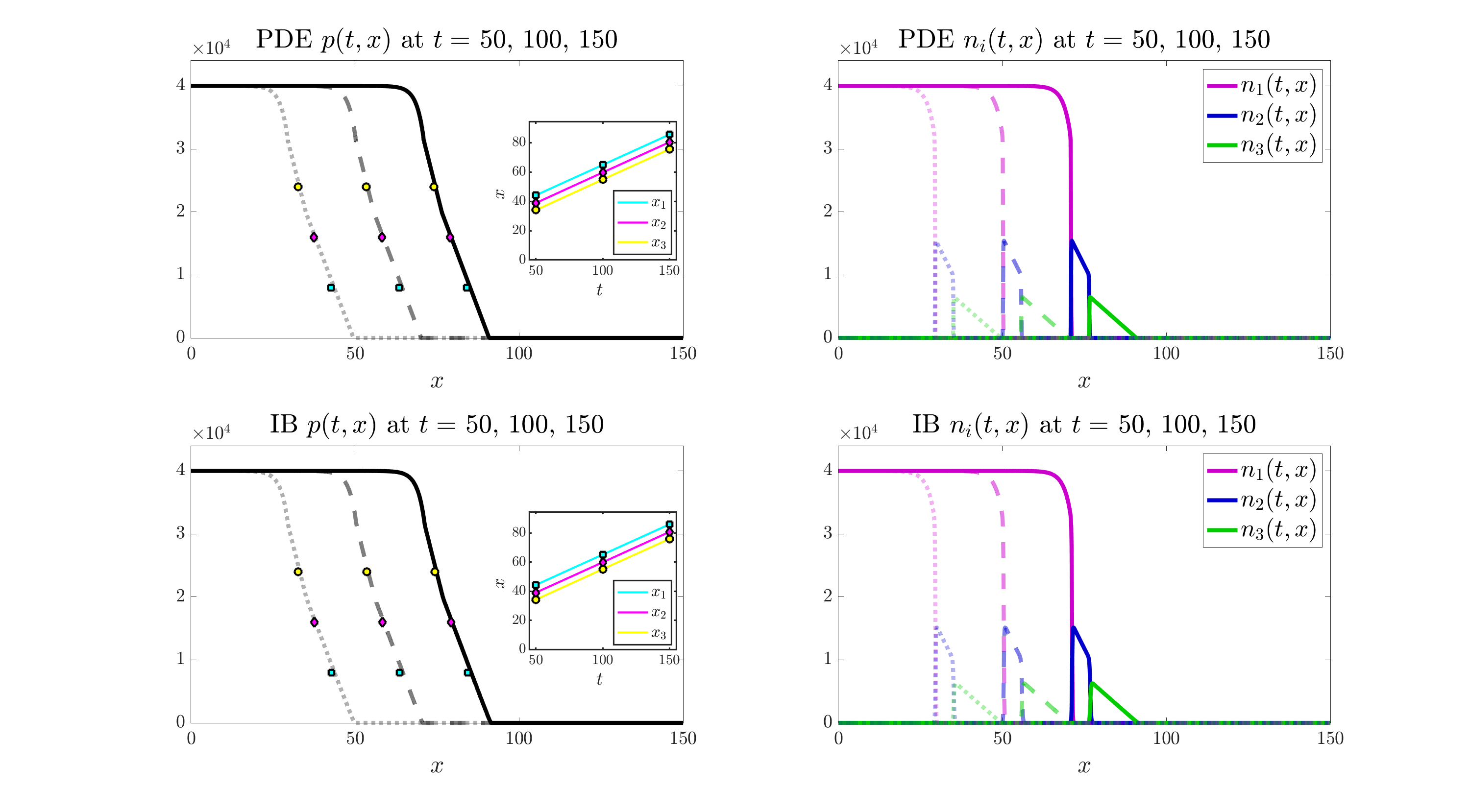}
\caption{{{\bf Main results under the baseline parameter setting for $I=3$.}} Comparing numerical solutions of the continuum model {(\bf{\textit{top panels}}}) with the averaged results of 10 simulations of the individual-based model {(\bf{\textit{bottom panels}}}), when $I=3$ and the values of the parameters $\alpha_i$, $\mu_i$, and $\omega_i$ are set according to~\eqref{eq:parametersI=3}. Plots display the cell pressure $p(t,x)$ {(\bf{\textit{left panels}}}) and the cell densities $n_i(t,x)$ {(\bf{\textit{right panels}}}) at three successive time instants -- i.e. $t=50$ \textcolor{black}{(dotted lines)}, $t=100$ \textcolor{black}{(dashed lines)}, and $t=150$ \textcolor{black}{(solid lines)}. The insets of the left panels display the plots of  $x_1(t)$ (cyan), $x_2(t)$ (magenta), and $x_3(t)$ (yellow) defined via~\eqref{eq:xplots}. The coloured markers in the plot of $p(t,x)$ highlight the values of $p(t,x_1(t))$ (cyan), $p(t,x_2(t))$ (magenta), and $p(t,x_3(t))$ (yellow) at $t=50$, $t=100$, and $t=150$. The numerically estimated wave speeds are {{$c_{\text{PDEn}}= 0.42$ and $c_{\text{IBn}}=0.42$, and the analytically predicted wave speed is $c_{\text{a}}=0.42$}}} 
\label{fig:Fig3}
\end{figure} 

\begin{figure}[H]
\includegraphics[width=\textwidth,trim={4cm 1cm 4cm 0cm},clip]{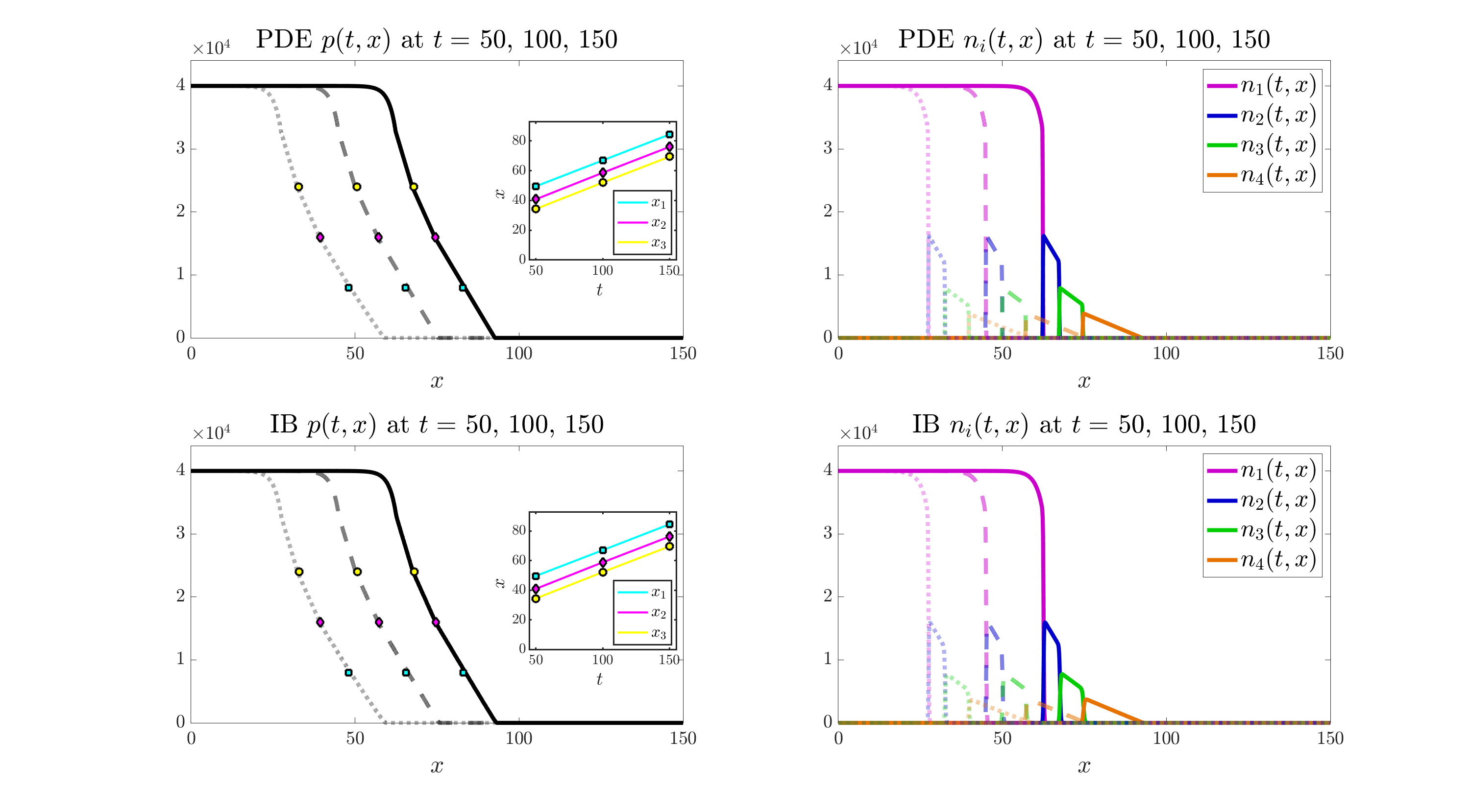}
\caption{{{\bf Main results under the baseline parameter setting for $I=4$.}} Comparing numerical solutions of the continuum model {(\bf{\textit{top panels}}}) with the averaged results of 10 simulations of the individual-based model {(\bf{\textit{bottom panels}}}), when $I=4$ and the values of the parameters $\alpha_i$, $\mu_i$, and $\omega_i$ are set according to~\eqref{eq:parametersI=3}-\eqref{eq:parametersI=4}. Plots display the cell pressure $p(t,x)$ {(\bf{\textit{left panels}}}) and the cell densities $n_i(t,x)$ {(\bf{\textit{right panels}}}) at three successive time instants -- i.e. $t=50$ \textcolor{black}{(dotted lines)}, $t=100$ \textcolor{black}{(dashed lines)}, and $t=150$ \textcolor{black}{(solid lines)}. The insets of the left panels display the plots of  $x_1(t)$ (cyan), $x_2(t)$ (magenta), and $x_3(t)$ (yellow) defined via~\eqref{eq:xplots}. The coloured markers in the plot of $p(t,x)$ highlight the values of $p(t,x_1(t))$ (cyan), $p(t,x_2(t))$ (magenta), and $p(t,x_3(t))$ (yellow) at $t=50$, $t=100$, and $t=150$. The numerically estimated wave speeds are {{$c_{\text{PDEn}}= 0.35$ and $c_{\text{IBn}}=0.35$, and the analytically predicted wave speed is $c_{\text{a}}=0.35$}}} 
\label{fig:Fig4}
\end{figure}

\begin{figure}[H]
\includegraphics[width=\textwidth]{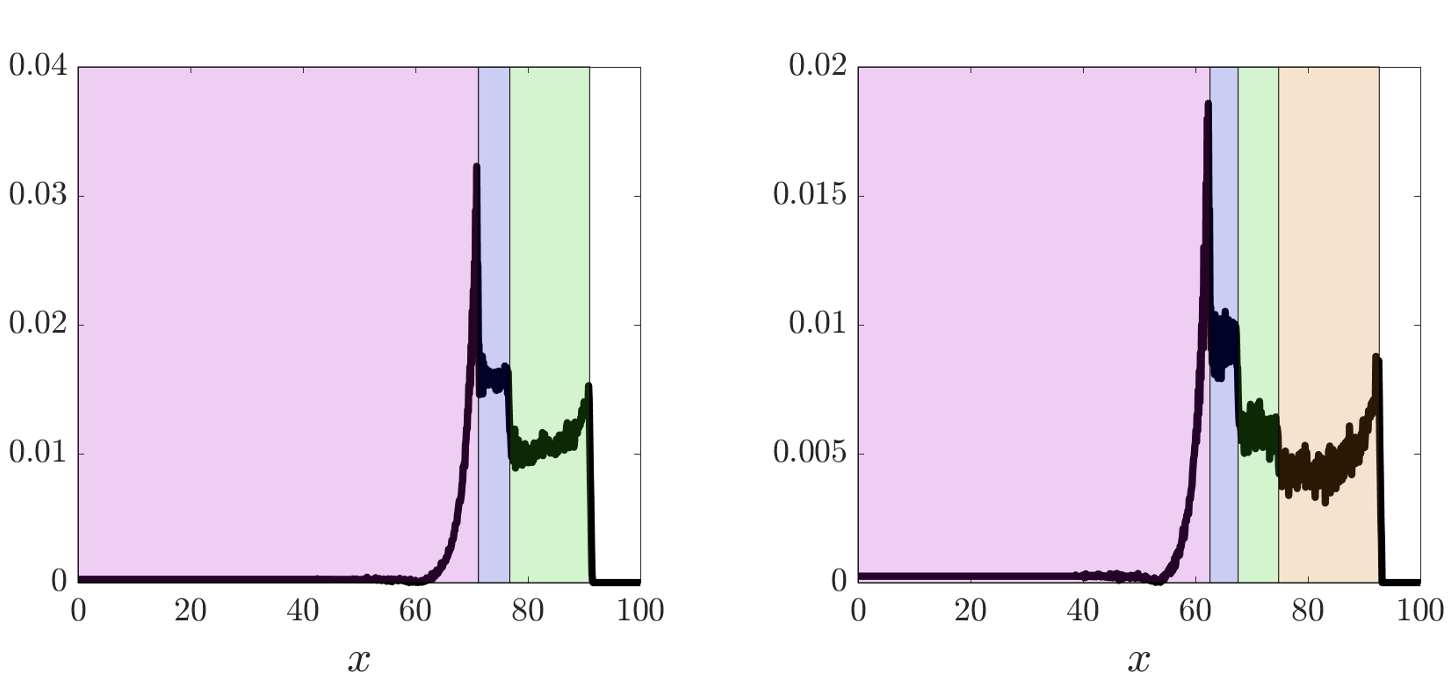}
\caption{{\textcolor{black}{{\bf Quantitative comparison between the individual-based and the continuum models.} Plot of the quantity $\dfrac{|p_{\text PDE}(t,x) - p_{\text IB}(t,x)|}{\overline{p}}$ at $t=150$, where $p_{\text PDE}$ is the cell pressure computed from numerical solutions of the continuum model displayed in Figure~\ref{fig:Fig3} (left panel) and Figure~\ref{fig:Fig4} (right panel), while  $p_{\text IB}$ is the cell pressure computed from the averaged results of 10 simulations of the individual-based model displayed in the same figures. The supports of the cell densities $n_i$ for $i=1,\ldots, I$ with $I=3$ (left panel) or $I=4$ (right panel) are highlighted in the same colours as those of the curves of the cell densities displayed in Figures~\ref{fig:Fig3} and~\ref{fig:Fig4}}}} 
\label{fig:Fig34err}
\end{figure} 

\paragraph{Propagation of travelling fronts} The numerical results in Figures~\ref{fig:Fig3}-\ref{fig:Fig4} and Figures~\ref{fig:Fig5}-\ref{fig:Fig6} show the propagation of travelling fronts wherein, as expected from Theorem~\ref{th:theo1}, the cell densities $n_i$ have disjoint supports, which means that cells with different phenotypes occupy distinct regions across the front (cf. right panels). In particular, as captured by assumptions~\eqref{eq:G_conditions}-\eqref{eq:alpha_conditions} and~\eqref{ass:modpar3}, cells with phenotypes labelled by larger values of the index $i$, which display a higher migratory ability, occupy regions closer to the edge of the front, while fast-dividing cells with the lowest mobility (i.e. cells with phenotype $i=1$) make up the bulk of the population in the rear of the front. Moreover, also in agreement with Theorem~\ref{th:theo1}, the cell pressure $p$ is continuous throughout the wave, whereas its first spatial derivative exhibits jump discontinuities at the interfaces between the regions occupied by cells with different phenotypes (cf. left panels). Specifically, after an initial transient during which the travelling front is formed, the numerical values of the first spatial derivative of the cell pressure $p$ are such that the interface conditions~\eqref{eq:kinkPns} are satisfied. 

In order to confirm the propagation of travelling waves, we also track the dynamics of the points $x_1(t)$, $x_2(t)$, and $x_3(t)$ such that 
\begin{equation}
p(t,x_1(t))=0.2 \, \overline{p}, \quad p(t,x_2(t))=0.4 \, \overline{p}, \quad p(t,x_3(t))=0.6 \, \overline{p}, \label{eq:xplots}
\end{equation} 
and verify that, after an initial transient during which the travelling wave is formed, the functions $x_1(t)$, $x_2(t)$, and $x_3(t)$ behave like straight lines with approximately the same constant slope (cf. insets of the left panels). We numerically estimate the wave speeds for the individual-based model and for the continuum model, denoted $c_{\text{IBn}}$ and $c_{\text{PDEn}}$, respectively, by measuring the slope of $x_1(t)$ after the transient. We then compare the numerically estimated wave speeds with the analytically predicted one, denoted $c_{\text{a}}$, which is obtained through~\eqref{eq:pat0}, i.e. substituting into the following formula
$$
c_{\text{a}} = \dfrac{\left(p(0)\right)^2}{\displaystyle{2 \sum_{j=2}^I \dfrac{\omega_j}{\mu_j} \, M_j}}
$$
the values of $M_2, \ldots, M_I$ and the value of $p(0)$ estimated from numerical solutions of the continuum model. In particular, we approximate $M_j$ with the numerical value of the integral of the cell density $n_j$ over the spatial domain for $j=2,\ldots,I$, which remains constant over time, while $p(0)$ is approximated as the numerical value of the cell pressure $p$ at the right endpoint of the support of the cell density $n_{1}$ at a time $t$ large enough that the travelling wave is established. We find that there is good agreement between the values of $c_{\text{IBn}}$, $c_{\text{PDEn}}$, and $c_{\text{a}}$ (cf. the values provided in the captions of Figures~\ref{fig:Fig3}-\ref{fig:Fig4} and Figures~\ref{fig:Fig5}-\ref{fig:Fig6}). {\textcolor{black}{We also verified that, when the travelling wave is fully formed, the positions of the right endpoints of the supports of the cell densities are consistent with those predicted by the travelling wave analysis (i.e. those given by \eqref{eq:revrev1}-\eqref{eq:revrev3}). In more detail, denoting by $X_i$ the position of the right endpoint of the support of the cell density $n_i$ at time $t$ large enough that the travelling wave is established, recalling that $z_1 := 0$ (cf. conditions \eqref{ass:SuppTW2}), we verified that the values of $Z_i := X_i - X_1$ for $i =2, \ldots, I$ are consistent with the analytically predicted values, denoted by $Z_{\text{a}i}$, which are obtained by substituting into \eqref{eq:revrev1}-\eqref{eq:revrev3} the analytically predicted wave speed, $c_{\text{a}}$, and the numerically estimated values of $M_j$ for $j=2,\ldots,I$.}} 

\paragraph{Impact of the parameters $\omega_i$ on the shape of travelling fronts} 
The numerical results in Figures~\ref{fig:Fig5} and \ref{fig:Fig6} show that, in agreement with the analytical results of Theorem~\ref{th:theo1}, the choice of the values of the parameters $\omega_i$ impacts on the shape of the travelling fronts that emerge. In more detail, these numerical results indicate that, once the travelling front is established, the cell densities $n_i$ are such that the interface conditions~\eqref{eq:sgndiffn} are satisfied throughout the front. Hence, at the interface between the region occupied by cells with phenotype labelled by the index $i+1$ and the region occupied by cells with phenotype labelled by the index $i$ (i.e. at the interface between the supports of $n_{i+1}$ and $n_{i}$), for $i=1, \ldots, I-1$: if $\omega_{i+1} > \omega_i$ then the value of the cell density at the right of the interface is smaller than the one at the left (cf. top, right panels); if $\omega_{i+1} = \omega_i$ then the value of the cell density at the right of the interface is the same as the one at the left (cf. central, right panels); if $\omega_{i+1} < \omega_i$ then the value of the cell density at the right of the interface is larger than the one at the left (cf. bottom, right panels).

\begin{figure}[H]
\includegraphics[width=\textwidth,trim={4cm 0cm 4cm 0cm},clip]{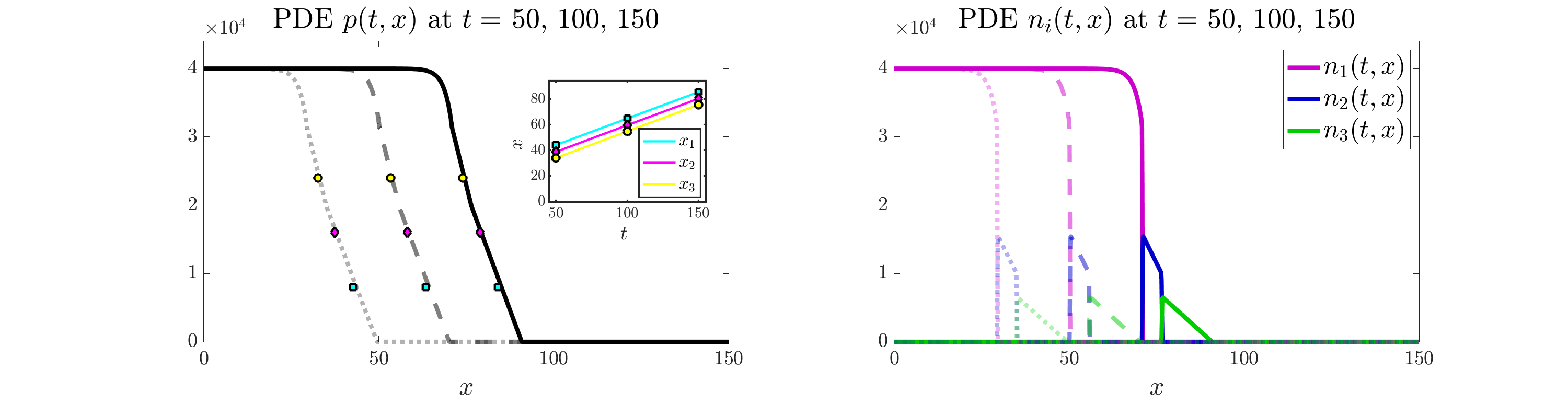}\\
\includegraphics[width=\textwidth,trim={4cm 0cm 4cm 0cm},clip]{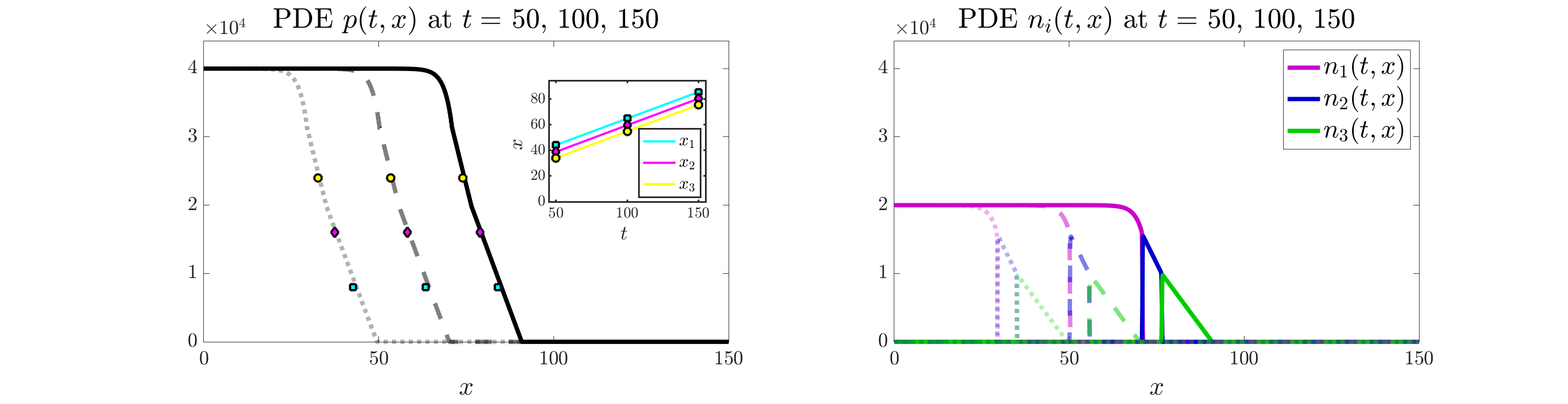}\\
\includegraphics[width=\textwidth,trim={4cm 0cm 4cm 0cm},clip]{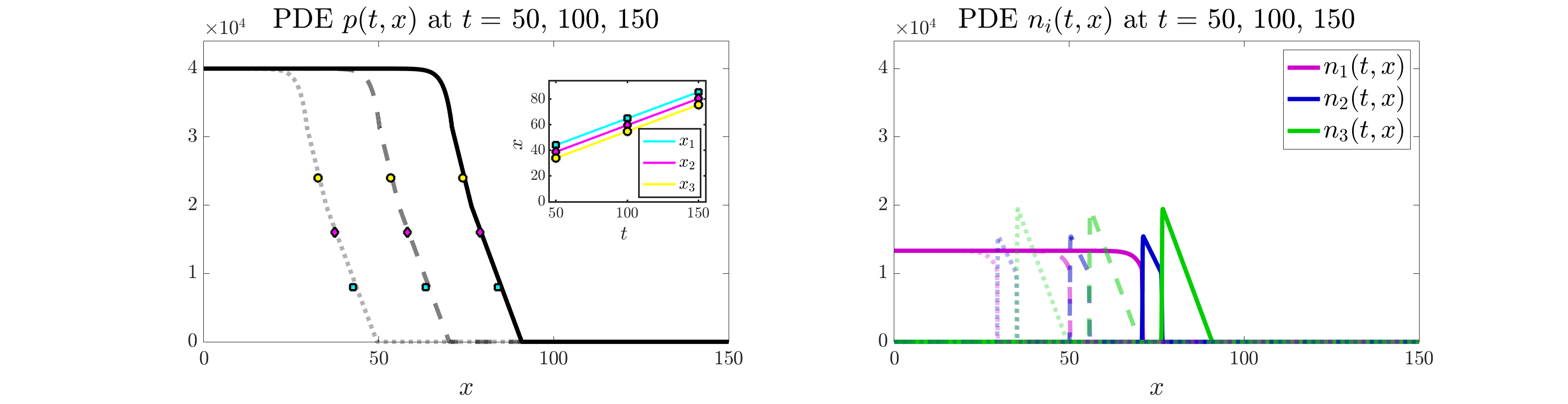}
\caption{{{\bf Main results under different values of the parameters $\omega_i$ for $I=3$.}} Numerical solutions of the continuum model when $I=3$ and the values of the parameters $\alpha_i$ and $\mu_i$ are set according to~\eqref{eq:parametersI=3}, while the values of the parameters $\omega_i$ are: ${\omega_1=1,\ \omega_2=2,\ \omega_3=3}$ {(\bf{\textit{top panels}}}); ${\omega_1=2,\ \omega_2=2,\ \omega_3=2}$ {(\bf{\textit{central panels}}}); and ${\omega_1=3,\ \omega_2=2,\ \omega_3=1}$ {(\bf{\textit{bottom panels}}}). Plots display the cell pressure $p(t,x)$ {(\bf{\textit{left panels}}}) and the cell densities $n_i(t,x)$ {(\bf{\textit{right panels}}}) at three successive time instants -- i.e. $t=50$ \textcolor{black}{(dotted lines)}, $t=100$ \textcolor{black}{(dashed lines)}, and $t=150$ \textcolor{black}{(solid lines)}. The insets of the left panels display the plots of  $x_1(t)$ (cyan), $x_2(t)$ (magenta), and $x_3(t)$ (yellow) defined via~\eqref{eq:xplots}. The coloured markers in the plot of $p(t,x)$ highlight the values of $p(t,x_1(t))$ (cyan), $p(t,x_2(t))$ (magenta), and $p(t,x_3(t))$ (yellow) at $t=50$, $t=100$, and $t=150$. For all cases we report on in this figure, the numerically estimated wave speed is $c_{\text{PDEn}}= 0.42$, while the analytically predicted wave speed is $c_{\text{a}}=0.42$}
\label{fig:Fig5}
\end{figure} 

\begin{figure}[H]
\includegraphics[width=\textwidth,trim={4cm 0cm 4cm 0cm},clip]{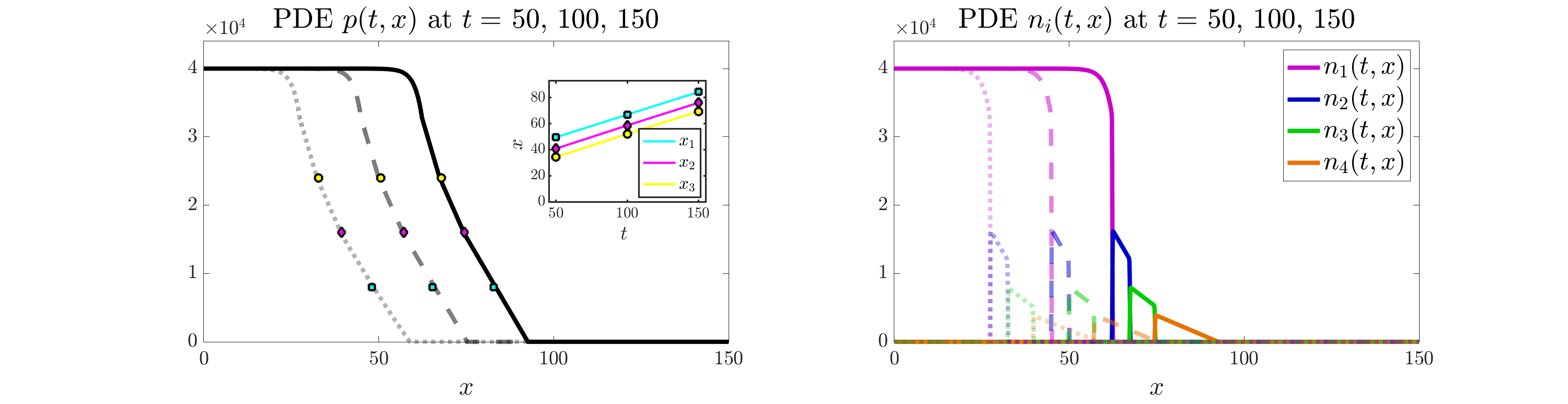}\\
\includegraphics[width=\textwidth,trim={4cm 0cm 4cm 0cm},clip]{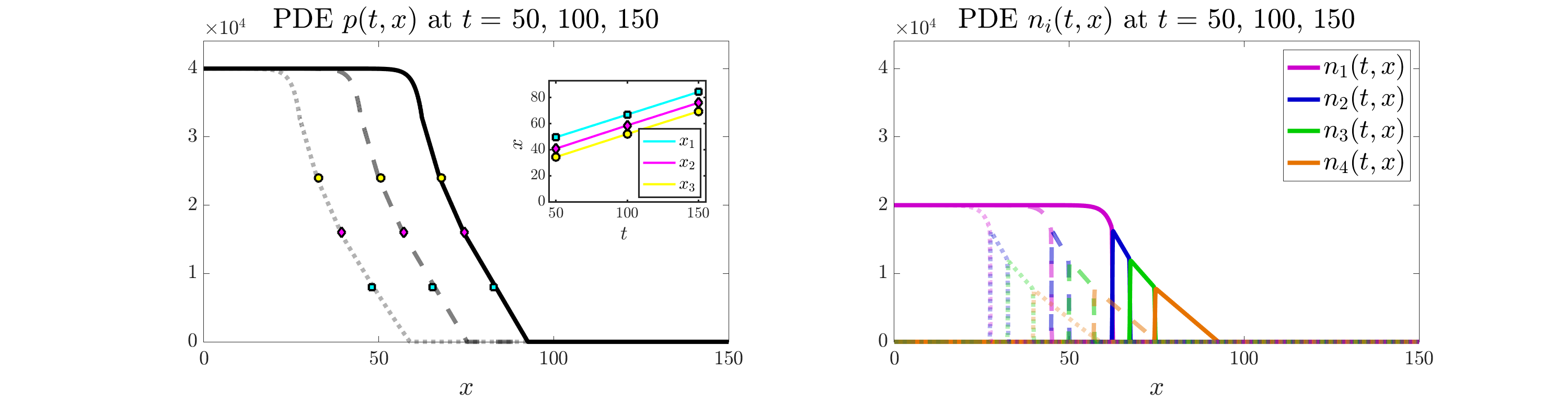}\\
\includegraphics[width=\textwidth,trim={4cm 0cm 4cm 0cm},clip]{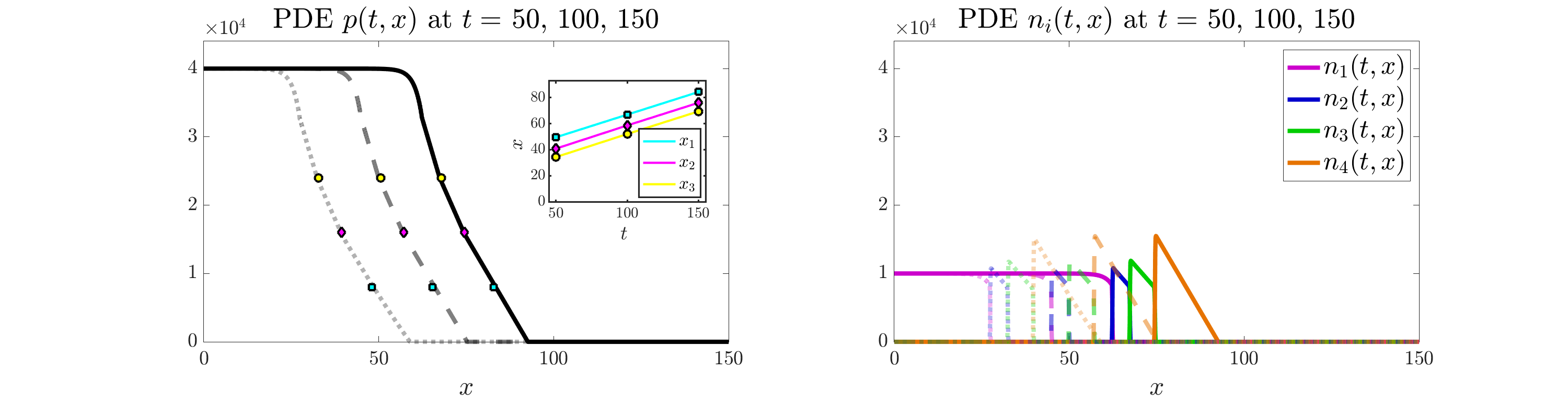}
\caption{{{\bf Main results under different values of the parameters $\omega_i$ for $I=4$.}} Numerical solutions of the continuum model when $I=4$ and the values of the parameters $\alpha_i$ and $\mu_i$ are set according to~\eqref{eq:parametersI=3}-\eqref{eq:parametersI=4}, while the values of the parameters $\omega_i$ are: ${\omega_1=1,\ \omega_2=2,\ \omega_3=3,\ \omega_4=4}$ {(\bf{\textit{top panels}}}); ${\omega_1=2,\ \omega_2=2,\ \omega_3=2,\ \omega_4=2}$ {(\bf{\textit{central panels}}}); and ${\omega_1=4,\ \omega_2=3,\ \omega_3=2,\ \omega_4=1}$ {(\bf{\textit{bottom panels}}}). Plots display the cell pressure $p(t,x)$ {(\bf{\textit{left panels}}}) and the cell densities $n_i(t,x)$ {(\bf{\textit{right panels}}}) at three successive time instants -- i.e. $t=50$ \textcolor{black}{(dotted lines)}, $t=100$ \textcolor{black}{(dashed lines)}, and $t=150$ \textcolor{black}{(solid lines)}. The insets of the left panels display the plots of  $x_1(t)$ (cyan), $x_2(t)$ (magenta), and $x_3(t)$ (yellow) defined via~\eqref{eq:xplots}. The coloured markers in the plot of $p(t,x)$ highlight the values of $p(t,x_1(t))$ (cyan), $p(t,x_2(t))$ (magenta), and $p(t,x_3(t))$ (yellow) at $t=50$, $t=100$, and $t=150$. For all cases we report on in this figure, the numerically estimated wave speed is $c_{\text{PDEn}}= 0.35$, while the analytically predicted wave speed is $c_{\text{a}}=0.35$} 
\label{fig:Fig6}
\end{figure}

\newpage
\section{Discussion and research perspectives}\label{sec:section6}
In this work, we have considered a PDE model for the growth of heterogeneous cell populations subdivided into multiple distinct discrete phenotypes. In this model, cells preferentially move towards regions where they are less compressed, and thus their movement occurs down the gradient of the cellular pressure. The cellular pressure  is defined as a weighted sum of the densities (i.e. the volume fractions) of cells with different phenotypes. To translate into mathematical terms the idea that cells with different phenotypes have different morphological and mechanical properties, both the cell mobility and the weighted amount the cells contribute to the cellular pressure vary with their phenotype. We have formally derived this model as the continuum limit of an on-lattice individual-based model, where cells are represented as single agents undergoing a branching biased random walk corresponding to  phenotype-dependent and pressure-regulated cell division, death, and movement. \textcolor{black}{Then, we have studied travelling wave solutions whereby cells with different phenotypes are spatially separated by sharp boundaries across the invading front (cf. Theorem~\ref{th:theo1}, Remark~\ref{rem:rem2}, and the schematic in Figure~\ref{fig:Fig2}). As discussed in~\cite{batlle2012molecular}, sharp borders between distinct cell types form at the interface of both adjacent tissues and regional domains within a tissue, and the occurrence of sharp spatial segregation between cells is observed both in {\it Drosophila}~\cite{irvine2001boundaries} and in vertebrate tissues~\cite{fraser1990segmentation,langenberg2005lineage,zeltser2001new}.} Finally, we have reported on  numerical simulations of the two models, demonstrating excellent agreement between them and the travelling wave analysis, thus validating the formal limiting procedure employed to derive the individual-based model from the continuum model and confirming the analytical results obtained. 

\textcolor{black}{The results presented here indicate that inter-cellular variability in mobility can support the maintenance of spatial segregation across invading fronts, whereby cells with a higher mobility drive invasion by occupying regions closer to the front edge.} These results have been obtained under the assumption that cells with phenotypes labelled by larger values of the index $i$ express a higher mobility -- cf. assumptions~\eqref{eq:gamma_conditions} on the parameters $\gamma_i$ of the individual-based model and the corresponding assumptions~\eqref{ass:modpar3} on the parameters $\mu_i$ of the continuum model. On the other hand, no specific assumptions have been made on the parameters $\omega_i$, which provide a measure of the weighted amount that cells with phenotype $i$ contribute towards the cellular pressure (cf. the constitutive relations~\eqref{eq:PDEmodel_3} and \eqref{eq:pressure_definition}) and the values of which can be related, for instance, to cell stiffness -- i.e. if cells with phenotype labelled by the index $i$ are stiffer than cells with phenotype labelled by the index $j$ then $\omega_i>\omega_j$. Therefore, the results we have presented apply both to scenarios where less stiff cells are more invasive and to opposite scenarios -- scenarios that, under assumptions~\eqref{eq:gamma_conditions} and~\eqref{ass:modpar3}, would correspond to assuming, respectively, $\omega_{i+1}<\omega_i$ and $\omega_{i+1}>\omega_i$ for $i=1,\ldots,I-1$. This is particularly relevant in the context of tumour growth. In fact, it has been observed, through both {\textit{in vitro}} and {\textit{in vivo}} experiments, that solid tumours can be made up of cells of varying stiffness~\cite{lv2021cell,swaminathan2011mechanical,rianna2020direct,baker2010cancer,han2020cell}. In a large number of cancer cell lines, more migratory and invasive phenotypes align with the cells that are softest/most deformable~\cite{lv2021cell,han2020cell}. For example, in ovarian cancer cell lines, cells with the highest migration capabilities can be up to ten times less stiff than cells with the lowest migration and invasion potential~\cite{swaminathan2011mechanical}. However, in some cell lines, such as breast cancer cell lines, stiffness has been observed to increase with tumourigenic invasive potential, especially in areas of stiff extracellular matrix~\cite{baker2010cancer,mok2019mapping}. 

We conclude with an outlook on possible research perspectives. \textcolor{black}{In this work the focus has been placed on the {\it propagation} of segregation properties, and we have thus studied the existence of travelling wave solutions of the system of PDEs~\eqref{eq:PDEmodel} that exhibit segregation between different cell types. Accordingly, we have carried out numerical simulations under initial conditions corresponding to scenarios where cells of different types initially occupy distinct regions of the spatial domain. As the next step, it would be relevant to investigate the {\it emergence} of segregation properties by studying analytically the convergence of solutions of the PDE system~\eqref{eq:PDEmodel} to such travelling wave solutions, and carrying out numerical simulations in cases where cells of different types are not separated at the initial time.}

\textcolor{black}{It would also be interesting to explore scenarios where assumptions~\eqref{eq:alpha_conditions} on the parameters $\alpha_i$ are relaxed (i.e. when also proliferation and death of cells with phenotypes labelled by values of the index $i > 1$ are incorporated into the model). In this regard, under assumptions~\eqref{ass:modpar3} on the mobility coefficients $\mu_i$, taking into account proliferation-migration trade-offs induced by the inherent energetic cost attached to cellular activities, it would be natural to consider the variant~\eqref{eq:PDEmodelrev} of the PDE system~\eqref{eq:PDEmodel} subject to assumptions~\eqref{eq:G_conditions} along with the assumptions $\alpha_1 > \alpha_2 > \ldots > \alpha_{I-1} > \alpha_{I} = 0$. In this case, the calculations carried out  in {\it Step 4} of the proof of Theorem~\ref{th:theo1} would break down, hinting that conditions~\eqref{eq:kinkPns}, which express the fact that the interfaces between the components $n_{i}(z)$ and $n_{i+1}(z)$ of the solution travel at the same speed $c$ for all $i=1,\ldots,I-1$, would not hold. Hence, under this scenario it is likely that the continuum model does not admit travelling wave solutions of the type of those of Theorem~\ref{th:theo1}.} 

While, focusing our attention on spatial segregation across travelling fronts, here we have considered the case where assumptions~\eqref{ass:modpar3} hold, this work could be extended by investigating the behaviour of solutions to the PDE system~\eqref{eq:PDEmodel}, and its more general variant~\eqref{eq:PDEmodelrev}, in cases where these assumptions do not hold and segregation properties may not be propagated~\cite{david2024degenerate,lorenzi2017interfaces,carrillo2018zoology}. It would also be interesting to study free boundary problems for these PDE systems, along the lines of those considered in~\cite{byrne1997free,lorenzi2020individual,mimura2010free} to model tissue development and tumour growth, and consider related transmission problems modelling cell invasion through thin membranes, in the vein of~\cite{chaplain2019derivation,ciavolella2024effective,ciavolella2021existence,giverso2022effective}.

Moreover, we could incorporate into the PDE system~\eqref{eq:PDEmodel}, or its generalised variant~\eqref{eq:PDEmodelrev}, the effects of phenotypic switching, possibly driven by the extracellular \textcolor{black}{environment~\cite{celora2021phenotypic,charras2014physical}}, and cell-cell adhesion, as similarly done for instance in~\cite{bubba2020hele,carrillo2018splitting,crossley2024phenotypic,macfarlane2022impact} and~\cite{berendsen2017cross,burger2020segregation,carrillo2019population,carrillo2018zoology,carrillo2018splitting}, respectively. Instead of resorting to phenomenological considerations to define the additional terms modelling these phenomena, we could extend the individual-based model considered here along with the formal procedure employed to derive the corresponding continuum model so as to encompass mechanisms of phenotypic switching and cell-cell adhesion. This would be biologically interesting in that it has been shown, on the one hand, that some cancer cell lines can adaptively alter their stiffness to become softer in order to overcome physical barriers or areas of high cellular pressure~\cite{rianna2020direct,han2020cell}, and, on the other hand, that in general internal stiffness of aggressive tumours is more heterogeneous than in quiescent tumours~\cite{mok2019mapping}. Adapting our current model to include dynamical variation in cell stiffness through the weights modelling contribution to the cellular pressure could be of interest to study these dynamics further. On this note, recent work by Zills {\textit{et. al.}}~\cite{zills2023enhanced} considered a 3D individual-based model of a growing cell population where cells could become softer during cell division. The cell stiffness then related to adhesive and repulsive forces of the cells and their contribution to the local cellular pressure, which in turn regulated cell division rates. Numerical simulations of this model were able to replicate experimental data on cellular spheroids where cells towards the border were faster, larger, and softer than those at the centre of the spheroid. However, as the model considered is an individual-based model, the study in~\cite{zills2023enhanced} is based on numerical simulations only. Hence, it would be interesting to extend our modelling framework to include such additional aspects and then carry out travelling wave analysis, in order to complement numerical simulations with analytical results to facilitate a more comprehensive exploration of the model parameter space.

\textcolor{black}{Another avenue for future research could be to investigate spatial segregation across travelling fronts in individual-based and continuum models for the growth of heterogeneous cell populations that encapsulate volume exclusion effects, which are not captured by the models considered here. For this, we expect modelling methods and analytical techniques similar to those employed in~\cite{baker2019free,fadai2020new,murphy2020mechanical,murphy2021travelling,tambyah2020free} to be useful.}

As a further extension of the present work, building on the modelling approach presented in~\cite{david2023phenotypic,lorenzi2023derivation,lorenzi2022trade,lorenzi2021}, we could also let the cell phenotype vary along a spectrum, and thus be described by a continuous variable $y \in \mathbb{R}$~\cite{lorenzi2024phenotype}. In this case, the evolution of the density (i.e. the volume fraction) of cells with phenotype $y$ at time $t \geq 0$, $n(t,x,y)$, would be governed by the following partial integro-differential equation model
\begin{equation}
\label{eq:PDEmodelrevcont}
\begin{cases}
\begin{array}{l}
\displaystyle{\partial_t n - \mu(y) \, \partial_x \left(n \, \partial_x p \right) = \alpha(y) \, G(p) \, n, \quad y \in \mathbb{R}},
\\\\
\displaystyle{p(t,x) := \int_{\mathbb{R}} \omega(y) \, n(t,x,y) \, {\rm d}y,}
\end{array}
\quad (t,x) \in (0,\infty) \times \mathbb{R}.
\end{cases}
\end{equation}
Note that, compared to the generalised form~\eqref{eq:PDEmodelrev} of the PDE model~\eqref{eq:PDEmodel}, subject to assumptions~\eqref{eq:G_conditions}, here the parameters $\mu_i$, $\alpha_i$, and $\omega_i$ have been replaced by the functions $\mu(y)$, $\alpha(y)$, and $\omega(y)$, respectively. We expect the model~\eqref{eq:PDEmodelrevcont} to be derivable from an underlying individual-based model through a formal limiting procedure analogous to the one employed in~\cite{lorenzi2023derivation,macfarlane2022individual}. 

The aforementioned extensions of the present work, which will bring new mathematical problems, will allow for further investigation into how phenotypic heterogeneity shapes invading fronts in growing cell populations.

\backmatter

\bmhead{Acknowledgements}
JAC was supported by the Advanced Grant Nonlocal-CPD (Nonlocal PDEs for Complex Particle Dynamics: Phase Transitions, Patterns and Synchronization) of the European Research Council Executive Agency (ERC) under the European Union’s Horizon 2020 research and innovation programme (grant agreement No. 883363). JAC was also partially supported by the EPSRC grant number EP/V051121/1 and by the “Maria de Maeztu” Excellence Unit IMAG, reference CEX2020-001105-M, funded by MCIN/AEI/10.13039/501100011033/. 
TL gratefully acknowledges support from: the Italian Ministry of University and Research (MUR) through
the grant PRIN 2020 project (No. 2020JLWP23) ``Integrated Mathematical Approaches to Socio-Epidemiological Dynamics'' (CUP: E15F21005420006) and the grant PRIN2022-PNRR project (No. P2022Z7ZAJ) ``A Unitary Mathematical Framework for Modelling Muscular Dystrophies'' (CUP: E53D23018070001) funded by the European Union - Next Generation EU; and the Istituto Nazionale di Alta Matematica (INdAM) and the Gruppo Nazionale per la Fisica Matematica (GNFM). TL would also like to thank Luigi Preziosi for insightful discussions during the development of the project.

\section*{Data Availability}
The code used for numerical simulations and data for this manuscript is available upon request.

\begin{appendices}

\section{Formal derivation of the continuum model from the individual-based model}\label{app:appendixA}
We detail the formal derivation of the PDE system~\eqref{eq:PDEmodel} from the branching biased random walk underlying the individual-based model described in Section~\ref{sec:section2}.

When cell dynamics are governed by the rules described in Section~\ref{sec:ibmodn}, using that $1+\tau G_i(p_j^k)_+-\tau G_i(p_j^k)_-=1+\tau G_i(p_j^k)$, the principle of mass balance gives the balance equation~\eqref{eq:mastereq}. Then, employing a formal procedure analogous to the one that we used in \cite{chaplain2020bridging,macfarlane2020hybrid,macfarlane2022individual}, we define
\[
n_i \equiv n_i(t,x) := n_{i,\ j}^{k},\quad n_i(t+\tau,x) := n_{i,\ j}^{k+1},\quad n_i(t,x\pm\Delta_x) := n_{i,\ j\pm1}^{k}
\]
and
\[
p \equiv p(t,x) := p_{j}^{k}, \quad p(t,x\pm \Delta_x) := p_{j\pm 1}^{k}.
\]
From the constitutive relation~\eqref{eq:pressure_definition} we find
\begin{equation}
p(t,x):=\displaystyle{\sum_{i=1}^{I} \omega_i \ n_i(t,x)}, \label{eq:pressure_definitionderiv}
\end{equation} 
while from the balance equation~\eqref{eq:mastereq} we obtain
\begin{eqnarray*}
n_i(t+\tau,x)&=&n_i\left\{\left(1+\tau G_i(p)\right)\left[1-\frac{\gamma_i \left(p-p(t,x+\Delta_x)\right)_+}{2\overline{p}}-\frac{\gamma_i \left(p-p(t,x-\Delta_x)\right)_+}{2\overline{p}}\right]\right\}\nonumber\\
&&+n_i(t,x+\Delta_x)\left\{ \left(1+\tau G_i(p(t,x+\Delta_x))\right)\left[\frac{\gamma_i \left(p(t,x+\Delta_x)-p\right)_+}{2\overline{p}}\right]\right\}\\
&&+n_i(t,x-\Delta_x)\left\{ \left(1+\tau G_i(p(t,x-\Delta_x))\right)\left[\frac{\gamma_i \left(p(t,x-\Delta_x)-p\right)_+}{2\overline{p}}\right]\right\}.\nonumber
\end{eqnarray*}
Splitting the growth terms gives
\begin{eqnarray*}
n_i(t+\tau,x)&=&n_i\left[1-\frac{\gamma_i \left(p-p(t,x+\Delta_x)\right)_+}{2\overline{p}}-\frac{\gamma_i \left(p-p(t,x-\Delta_x)\right)_+}{2\overline{p}}\right]\nonumber\\
&&+n_i(t,x+\Delta_x)\left[\frac{\gamma_i \left(p(t,x+\Delta_x)-p\right)_+}{2\overline{p}}\right]\nonumber\\
&&+n_i(t,x-\Delta_x)\left[ \frac{\gamma_i \left(p(t,x-\Delta_x)-p\right)_+}{2\overline{p}}\right]\\
&&+\tau G_i(p)n_i\left[ 1-\frac{\gamma_i \left(p-p(t,x+\Delta_x)\right)_+}{2\overline{p}}-\frac{\gamma_i \left(p-p(t,x-\Delta_x)\right)_+}{2\overline{p}}\right]\nonumber\\
&&+\tau G_i(p(t,x+\Delta_x))n_i(t,x+\Delta_x)\left[\frac{\gamma_i \left(p(t,x+\Delta_x)-p\right)_+}{2\overline{p}}\right]\nonumber\\
&&+\tau G_i(p(t,x-\Delta_x))n_i(t,x-\Delta_x)\left[\frac{\gamma_i \left(p(t,x-\Delta_x)-p\right)_+}{2\overline{p}}\right].\nonumber
\end{eqnarray*}
Now, assuming $n_i$ to be sufficiently regular, using that
\[
n_i(t,x\pm\Delta_x)= n_i \pm \Delta_x \frac{\partial n_i}{\partial x}+\frac{\Delta_x^2}{2}\frac{\partial ^2 n_i}{\partial x^2 }+\mathcal{O}(\Delta_x^3),
\]
we formally obtain
\begin{eqnarray*}
n_i(t+\tau,x)&=&n_i\left[1-\frac{\gamma_i \left(p-p(t,x+\Delta_x)\right)_+}{2\overline{p}}-\frac{\gamma_i \left(p-p(t,x-\Delta_x)\right)_+}{2\overline{p}}\right]\nonumber\\
&&+\left(n_i+\Delta_x \frac{\partial n_i}{\partial x}+\frac{\Delta_x^2}{2}\frac{\partial ^2 n_i}{\partial x^2 }\right)\left[\frac{\gamma_i \left(p(t,x+\Delta_x)-p\right)_+}{2\overline{p}}\right]\nonumber\\
&&+\left(n_i-\Delta_x \frac{\partial n_i}{\partial x}+\frac{\Delta_x^2}{2}\frac{\partial ^2 n_i}{\partial x^2 }\right)\left[ \frac{\gamma_i \left(p(t,x-\Delta_x)-p\right)_+}{2\overline{p}}\right]\\
&&+\tau G_i(p)n_i\left[ 1-\frac{\gamma_i \left(p-p(t,x+\Delta_x)\right)_+}{2\overline{p}}-\frac{\gamma_i \left(p-p(t,x-\Delta_x)\right)_+}{2\overline{p}}\right]\nonumber\\
&&+\tau G_i(p(t,x+\Delta_x))\left(n_i+\Delta_x \frac{\partial n_i}{\partial x}+\frac{\Delta_x^2}{2}\frac{\partial ^2 n_i}{\partial x^2 }\right)\left[\frac{\gamma_i \left(p(t,x+\Delta_x)-p\right)_+}{2\overline{p}}\right]\nonumber\\
&&+\tau G_i(p(t,x-\Delta_x))\left(n_i-\Delta_x \frac{\partial n_i}{\partial x}+\frac{\Delta_x^2}{2}\frac{\partial ^2 n_i}{\partial x^2 }\right)\left[\frac{\gamma_i \left(p(t,x-\Delta_x)-p\right)_+}{2\overline{p}}\right]+\mathcal{O}(\Delta_x^3).\nonumber
\end{eqnarray*}
Furthermore, rearranging terms yields
{
\begin{eqnarray*}
n_i(t+\tau,x)&=&n_i\left[1-\frac{\gamma_i \left(p-p(t,x+\Delta_x)\right)_+}{2\overline{p}}-\frac{\gamma_i \left(p-p(t,x-\Delta_x)\right)_+}{2\overline{p}}\right]\nonumber\\
&&+n_i\left[\frac{\gamma_i \left(p(t,x+\Delta_x)-p\right)_+}{2\overline{p}}+\frac{\gamma_i \left(p(t,x-\Delta_x)-p\right)_+}{2\overline{p}}\right]\nonumber\\
&&+\Delta_x \frac{\partial n_i}{\partial x}\left[\frac{\gamma_i \left(p(t,x+\Delta_x)-p\right)_+}{2\overline{p}}- \frac{\gamma_i \left(p(t,x-\Delta_x)-p\right)_+}{2\overline{p}}\right]\nonumber\\
&&+\frac{\Delta_x^2}{2}\frac{\partial ^2 n_i}{\partial x^2 }\left[\frac{\gamma_i \left(p(t,x+\Delta_x)-p\right)_+}{2\overline{p}}+ \frac{\gamma_i \left(p(t,x-\Delta_x)-p\right)_+}{2\overline{p}}\right]\\
&&+\tau G_i(p)n_i\left[1-\frac{\gamma_i \left(p-p(t,x+\Delta_x)\right)_+}{2\overline{p}}-\frac{\gamma_i \left(p-p(t,x-\Delta_x)\right)_+}{2\overline{p}}\right]\nonumber\\
&&+\tau G_i(p)n_i\left[\frac{\gamma_i \left(p(t,x+\Delta_x)-p\right)_+}{2\overline{p}}+\frac{\gamma_i \left(p(t,x-\Delta_x)-p\right)_+}{2\overline{p}}\right]\nonumber\\
&&+\Delta_x \frac{\partial n_i}{\partial x}\left[\tau G_i(p(t,x+\Delta_x))\frac{\gamma_i \left(p(t,x+\Delta_x)-p\right)_+}{2\overline{p}}\right]\nonumber\\
&&-\Delta_x \frac{\partial n_i}{\partial x}\left[\tau G_i(p(t,x-\Delta_x))\frac{\gamma_i \left(p(t,x-\Delta_x)-p\right)_+}{2\overline{p}}\right]\nonumber\\
&&+\frac{\Delta_x^2}{2} \frac{\partial^2 n_i}{\partial x^2}\left[\tau G_i(p(t,x+\Delta_x))\frac{\gamma_i \left(p(t,x+\Delta_x)-p\right)_+}{2\overline{p}}\right]\nonumber\\
&&+\frac{\Delta_x^2}{2} \frac{\partial^2 n_i}{\partial x^2}\left[\tau G_i(p(t,x-\Delta_x))\frac{\gamma_i \left(p(t,x-\Delta_x)-p\right)_+}{2\overline{p}}\right]+\mathcal{O}(\Delta_x^3).\nonumber
\end{eqnarray*}
}
Then, using the property that $(f)_+ -(-f)_+=(f)_+ -(f)_-=f$, we can simplify the latter equation to
\begin{eqnarray*}
n_i(t+\tau,x)&=&n_i\left[1+\frac{\gamma_i \left(p(t,x+\Delta_x)-p\right)}{2\overline{p}}+\frac{\gamma_i \left(p(t,x-\Delta_x)-p\right)}{2\overline{p}}\right]\nonumber\\
&&+\Delta_x \frac{\partial n_i}{\partial x}\left[\frac{\gamma_i \left(p(t,x+\Delta_x)-p\right)_+}{2\overline{p}}- \frac{\gamma_i \left(p(t,x-\Delta_x)-p\right)_+}{2\overline{p}}\right]\nonumber\\
&&+\frac{\Delta_x^2}{2}\frac{\partial ^2 n_i}{\partial x^2 }\left[\frac{\gamma_i \left(p(t,x+\Delta_x)-p\right)_+}{2\overline{p}}+ \frac{\gamma_i \left(p(t,x-\Delta_x)-p\right)_+}{2\overline{p}}\right]\\
&&+\tau G_i(p)n_i\left[1+\frac{\gamma_i \left(p(t,x+\Delta_x)-p\right)}{2\overline{p}}+\frac{\gamma_i \left(p(t,x-\Delta_x)-p\right)}{2\overline{p}}\right]\nonumber\\
&&+\Delta_x \frac{\partial n_i}{\partial x}\left[\tau G_i(p(t,x+\Delta_x))\frac{\gamma_i \left(p(t,x+\Delta_x)-p\right)_+}{2\overline{p}}\right]\nonumber\\
&&-\Delta_x \frac{\partial n_i}{\partial x}\left[\tau G_i(p(t,x-\Delta_x))\frac{\gamma_i \left(p(t,x-\Delta_x)-p\right)_+}{2\overline{p}}\right]\nonumber\\
&&+\frac{\Delta_x^2}{2} \frac{\partial^2 n_i}{\partial x^2}\left[\tau G_i(p(t,x+\Delta_x))\frac{\gamma_i \left(p(t,x+\Delta_x)-p\right)_+}{2\overline{p}}\right]\nonumber\\
&&+\frac{\Delta_x^2}{2} \frac{\partial^2 n_i}{\partial x^2}\left[\tau G_i(p(t,x-\Delta_x))\frac{\gamma_i \left(p(t,x-\Delta_x)-p\right)_+}{2\overline{p}}\right]\nonumber\\
&&+\mathcal{O}(\Delta_x^3),\nonumber
\end{eqnarray*}
from which, if $p$ is sufficiently regular, using that

\[
p(t,x\pm\Delta_x)=p\pm \Delta_x \frac{\partial p}{\partial x}+\frac{\Delta_x^2}{2}\frac{\partial^2 p}{\partial x^2}+\mathcal{O}(\Delta_x^3),
\]
we formally obtain
{{
\begin{eqnarray*}
n_i(t+\tau,x)&=&n_i\left[1+\frac{\gamma_i \left(\Delta_x \frac{\partial p}{\partial x}+\frac{\Delta_x^2}{2}\frac{\partial^2 p}{\partial x^2}\right)}{2\overline{p}}+\frac{\gamma_i \left(-\Delta_x \frac{\partial p}{\partial x}+\frac{\Delta_x^2}{2}\frac{\partial^2 p}{\partial x^2}\right)}{2\overline{p}}\right]\nonumber\\
&&+\Delta_x \frac{\partial n_i}{\partial x}\left[\frac{\gamma_i \left(\Delta_x \frac{\partial p}{\partial x}+\frac{\Delta_x^2}{2}\frac{\partial^2 p}{\partial x^2}\right)_+}{2\overline{p}}- \frac{\gamma_i \left(-\Delta_x \frac{\partial p}{\partial x}+\frac{\Delta_x^2}{2}\frac{\partial^2 p}{\partial x^2}\right)_+}{2\overline{p}}\right]\nonumber\\
&&+\frac{\Delta_x^2}{2}\frac{\partial ^2 n_i}{\partial x^2 }\left[\frac{\gamma_i \left(\Delta_x \frac{\partial p}{\partial x}+\frac{\Delta_x^2}{2}\frac{\partial^2 p}{\partial x^2}\right)_+}{2\overline{p}}+ \frac{\gamma_i \left(-\Delta_x \frac{\partial p}{\partial x}+\frac{\Delta_x^2}{2}\frac{\partial^2 p}{\partial x^2}\right)_+}{2\overline{p}}\right]\\
&&+\tau G_i(p)n_i\left[1+\frac{\gamma_i \left(\Delta_x \frac{\partial p}{\partial x}+\frac{\Delta_x^2}{2}\frac{\partial^2 p}{\partial x^2}\right)}{2\overline{p}}+\frac{\gamma_i \left(-\Delta_x \frac{\partial p}{\partial x}+\frac{\Delta_x^2}{2}\frac{\partial^2 p}{\partial x^2}\right)}{2\overline{p}}\right]\nonumber\\
&&+\Delta_x \frac{\partial n_i}{\partial x}\left[\tau G_i\left(p+\Delta_x \frac{\partial p}{\partial x}+\frac{\Delta_x^2}{2}\frac{\partial^2 p}{\partial x^2}\right)\frac{\gamma_i \left(\Delta_x \frac{\partial p}{\partial x}+\frac{\Delta_x^2}{2}\frac{\partial^2 p}{\partial x^2}\right)_+}{2\overline{p}}\right]\nonumber\\
&&-\Delta_x \frac{\partial n_i}{\partial x}\left[\tau G_i\left(p-\Delta_x \frac{\partial p}{\partial x}+\frac{\Delta_x^2}{2}\frac{\partial^2 p}{\partial x^2}\right)\frac{\gamma_i \left(-\Delta_x \frac{\partial p}{\partial x}+\frac{\Delta_x^2}{2}\frac{\partial^2 p}{\partial x^2}\right)_+}{2\overline{p}}\right]\nonumber\\
&&+\frac{\Delta_x^2}{2} \frac{\partial^2 n_i}{\partial x^2}\left[\tau G_i\left(p+\Delta_x \frac{\partial p}{\partial x}+\frac{\Delta_x^2}{2}\frac{\partial^2 p}{\partial x^2}\right)\frac{\gamma_i \left(\Delta_x \frac{\partial p}{\partial x}+\frac{\Delta_x^2}{2}\frac{\partial^2 p}{\partial x^2}\right)_+}{2\overline{p}}\right]\nonumber\\
&&+\frac{\Delta_x^2}{2} \frac{\partial^2 n_i}{\partial x^2}\left[\tau G_i\left(p-\Delta_x \frac{\partial p}{\partial x}+\frac{\Delta_x^2}{2}\frac{\partial^2 p}{\partial x^2}\right)\frac{\gamma_i \left(-\Delta_x \frac{\partial p}{\partial x}+\frac{\Delta_x^2}{2}\frac{\partial^2 p}{\partial x^2}\right)_+}{2\overline{p}}\right]+\mathcal{O}(\Delta_x^3).\nonumber
\end{eqnarray*}}}
Rearranging terms and neglecting terms of order higher than $\mathcal{O}(\Delta_x^3)$ we find
{\small{
\begin{eqnarray*}
n_i(t+\tau,x)&=&n_i\left[1+\frac{\gamma_i \Delta_x^2}{2\overline{p}} \frac{\partial^2 p}{\partial x^2}\right]+\Delta_x \frac{\partial n_i}{\partial x}\left[\frac{\gamma_i \left(\Delta_x \frac{\partial p}{\partial x}\right)_+}{2\overline{p}}- \frac{\gamma_i \left(-\Delta_x \frac{\partial p}{\partial x}\right)_+}{2\overline{p}}\right]\\
&&+\tau G_i(p)n_i\left[1+\frac{\gamma_i \Delta_x^2}{2\overline{p}} \frac{\partial^2 p}{\partial x^2}\right]\nonumber\\
&&+\Delta_x \frac{\partial n_i}{\partial x}\left[\tau G_i\left(p+\Delta_x \frac{\partial p}{\partial x}+\frac{\Delta_x^2}{2}\frac{\partial^2 p}{\partial x^2}\right)\frac{\gamma_i \left(\Delta_x \frac{\partial p}{\partial x}\right)_+}{2\overline{p}}\right]\nonumber\\
&&-\Delta_x \frac{\partial n_i}{\partial x}\left[\tau G_i\left(p-\Delta_x \frac{\partial p}{\partial x}+\frac{\Delta_x^2}{2}\frac{\partial^2 p}{\partial x^2}\right)\frac{\gamma_i \left(-\Delta_x \frac{\partial p}{\partial x}\right)_+}{2\overline{p}}\right]+\mathcal{O}(\Delta_x^3).\nonumber
\end{eqnarray*}}}
Again, using that $(f)_+ -(-f)_+=(f)_+ -(f)_-=f$, we can simplify the above equation to
\begin{eqnarray*}
n_i(t+\tau,x)&=&n_i\left[1+\frac{\gamma_i \Delta_x^2}{2\overline{p}} \frac{\partial^2 p}{\partial x^2}\right]\nonumber
+\frac{\gamma_i\Delta_x^2}{2\overline{p}} \frac{\partial n_i}{\partial x}\frac{\partial p}{\partial x}
+\tau G_i(p)n_i\left[1+\frac{\gamma_i \Delta_x^2}{2\overline{p}} \frac{\partial^2 p}{\partial x^2}\right]\nonumber\\
&&+\Delta_x \frac{\partial n_i}{\partial x}\left[\tau G_i\left(p+\Delta_x \frac{\partial p}{\partial x}+\frac{\Delta_x^2}{2}\frac{\partial^2 p}{\partial x^2}\right)\frac{\gamma_i \left(\Delta_x \frac{\partial p}{\partial x}\right)_+}{2\overline{p}}\right]\nonumber\\
&&-\Delta_x \frac{\partial n_i}{\partial x}\left[\tau G_i\left(p-\Delta_x \frac{\partial p}{\partial x}+\frac{\Delta_x^2}{2}\frac{\partial^2 p}{\partial x^2}\right)\frac{\gamma_i \left(-\Delta_x \frac{\partial p}{\partial x}\right)_+}{2\overline{p}}\right]+\mathcal{O}(\Delta_x^3),\nonumber
\end{eqnarray*}
from which, rearranging again terms, we obtain
\begin{eqnarray*}
&&n_i(t+\tau,x)=n_i+\frac{\gamma_i \Delta_x^2}{2\overline{p}}\left[n_i \frac{\partial^2 p}{\partial x^2}+ \frac{\partial n_i}{\partial x}\frac{\partial p}{\partial x}\right]+\tau G_i(p)n_i
+\frac{\gamma_i \Delta_x^2 \tau}{2\overline{p}} G_i(p)n_i\frac{\partial^2 p}{\partial x^2}\\
&&+\frac{\gamma_i \Delta_x^2 \tau}{2\overline{p}}\frac{\partial n_i}{\partial x}\left[ \alpha_iG\left(p+\Delta_x \frac{\partial p}{\partial x}+\frac{\Delta_x^2}{2}\frac{\partial^2 p}{\partial x^2}\right)\left(\frac{\partial p}{\partial x}\right)_+-G_i\left(p-\Delta_x \frac{\partial p}{\partial x}+\frac{\Delta_x^2}{2}\frac{\partial^2 p}{\partial x^2}\right) \left(-\frac{\partial p}{\partial x}\right)_+\right]\nonumber\\
&&+\mathcal{O}(\Delta_x^3).\nonumber
\end{eqnarray*}
Now taking $n_i$ over to the left-hand side and dividing through by $\tau$ yields
\begin{eqnarray*}
&&\frac{n_i(t+\tau,x)-n_i}{\tau}=\frac{\gamma_i \Delta_x^2}{2\tau \overline{p}}\left[n_i \frac{\partial^2 p}{\partial x^2}+ \frac{\partial n_i}{\partial x}\frac{\partial p}{\partial x}\right]+ G_i(p)n_i+\frac{\gamma_i \Delta_x^2 }{2\overline{p}} G_i(p)n_i\frac{\partial^2 p}{\partial x^2}\\
&&+\frac{\gamma_i \Delta_x^2 }{2\overline{p}}\frac{\partial n_i}{\partial x}\left[ G_i\left(p+\Delta_x \frac{\partial p}{\partial x}+\frac{\Delta_x^2}{2}\frac{\partial^2 p}{\partial x^2}\right)\left(\frac{\partial p}{\partial x}\right)_+-G_i\left(p-\Delta_x \frac{\partial p}{\partial x}+\frac{\Delta_x^2}{2}\frac{\partial^2 p}{\partial x^2}\right) \left(-\frac{\partial p}{\partial x}\right)_+\right]\nonumber\\
&&+\mathcal{O}(\Delta_x^3).\nonumber
\end{eqnarray*}
Finally, letting $\Delta_x\rightarrow 0$ and $\tau \rightarrow 0$ in such a way that
$$
\frac{\gamma_i \Delta_x^2}{2\tau \overline{p}}\rightarrow \mu_i, \quad \text{where} \quad \mu_i \in \mathbb{R}^*_+,\ i=1, \ldots, I,
$$
we formally obtain the following PDEs
\begin{eqnarray*}
\frac{\partial n_i}{\partial t}&=&\mu_i\left(n_i \frac{\partial^2 p}{\partial x^2}+ \frac{\partial n_i}{\partial x}\frac{\partial p}{\partial x}\right)+ G_i(p)n_i, \quad i=1,\ldots,I, \quad (t,x) \in (0,\infty) \times \mathbb{R}, \nonumber
\end{eqnarray*}
subject to the assumptions~\eqref{ass:modpar3} on the mobility coefficients, $\mu_i$, which descend from assumptions~\eqref{eq:gamma_conditions} when conditions~\eqref{ass:formderi} hold. \textcolor{black}{The above PDEs alongside the constitutive relation~\eqref{eq:pressure_definitionderiv} can then be rewritten as the continuum model~\eqref{eq:PDEmodelrev}.}

\section{\textcolor{black}{Addendum to {\it Step 8} of the proof of Theorem~\ref{th:theo1}}}\label{app:appendixC}
\textcolor{black}{
Differentiating the differential equation~\eqref{eq:TW2_1} with respect to $z$ gives
$$
- c \, \left(p'\right)' - \mu_1 \left[p \, (p')'' + 2 \, p' \, (p')' + p'' \, p'  \right] = \left[\alpha_1 \, \frac{\rmd G}{\rmd p} \, p +  \alpha_1 \, G\left(p\right)\right] \, p',
$$
and, given the conditions~\eqref{eq:TW2_2}, we complement the above differential equation with the following boundary conditions
$$
p'(- \infty) = 0, \qquad p'(0^-) = -\frac{c}{\mu_1}.
$$
}

\textcolor{black}{
On the other hand, differentiating the differential equation~\eqref{eq:TW2_1} with respect to $c$ yields
$$
- c \, \left(\frac{\partial p}{\partial c}\right)' - \mu_1 \left[p \, \left(\frac{\partial p}{\partial c}\right)'' + 2 \, p'  \, \left(\frac{\partial p}{\partial c}\right)' + p'' \, \frac{\partial p}{\partial c}  \right] =  \left[\alpha_1 \, \frac{\rmd G}{\rmd p} \, p +  \alpha_1 \, G\left(p\right)\right] \, \frac{\partial p}{\partial c} + p'
$$
and, given the conditions~\eqref{eq:TW2_2}, we complement the above differential equation with the following boundary conditions
$$
\frac{\partial p}{\partial c}(- \infty) = 0, \qquad \left(\frac{\partial p}{\partial c}\right)'(0^-) = -\frac{1}{\mu_1}.
$$
}

\textcolor{black}{Introducing the notation $f := p'$ and $\displaystyle{g := \frac{\partial p}{\partial c}}$, the above differential equations along with the corresponding boundary conditions can be rewritten, respectively, as 
\begin{subnumcases}{\label{e.mono1bisFBb71}}
- c \, \left(f\right)' - \left[a_1 \, (f)'' + a_2 \, (f)' + a_3 \, f  \right] = a_4 \, f, \quad z \in (-\infty,0), & \label{e.mono1bisFBb71_1}
   \\
   \nonumber\\
f(- \infty) = 0, \qquad f(0^-) = -\dfrac{c}{\mu_1} \label{e.mono1bisFBb71_2}
\end{subnumcases}
and 
\begin{subnumcases}{\label{e.mono2bisFBb61}}
- c \, \left(g\right)' - \left[a_1 \, \left(g\right)'' + a_2 \,  \left(g\right)' + a_3 \, g  \right] =  a_4 \, g + f, \quad z \in (-\infty,0), & \label{e.mono2bisFBb61_1}
   \\
   \nonumber\\
g(- \infty) = 0, \qquad g'(0^-) = -\dfrac{1}{\mu_1}, \label{e.mono2bisFBb61_2}
\end{subnumcases}
where the coefficients $a_1, \ldots, a_4$ are implicitly defined. Since $f<0$ on $(-\infty,0)$ and $g'(0^-)<0$, noting that both $f(- \infty) = 0$ and $g(- \infty) = 0$ and the right-hand side of the differential equation~\eqref{e.mono2bisFBb61_1} contains the additional term $f<0$ compared to the right-hand side of the differential equation~\eqref{e.mono1bisFBb71_1}, we deduce that $\displaystyle{g = \frac{\partial p}{\partial c} < 0}$ on $(-\infty,0)$, from which we conclude that $c \mapsto p(0^-)$ is monotonically decreasing.}

\section{Numerical scheme for the PDE model~\eqref{eq:PDEmodel}}\label{app:appendixB}
To construct numerical solutions of the PDE system~\eqref{eq:PDEmodel} we use a finite-volume scheme with an upwind stabilisation as well as a MUSCL reconstruction of interface values to increase the spatial order of accuracy. This scheme is built upon those we employed in~\cite{bubba2020discrete,lorenzi2023derivation}. The full details are provided below.

We first discretise the 1D domain into spatial cells of size $ \Delta x$.
The time discretisation is a forward Euler method. We allow the time-step to vary during the simulation using the CFL-like condition
\[
 \frac{\Delta t}{\Delta x} \max(\mu_1,\ldots,\mu_I) \max_x( |\nabla p|) < 1.
\]

For each spatial cell interface, we reconstruct the values of the cell densities adopting a MUSCL approach. For the $i^{th}$ component of the solution we have
\[ 
 n^L_{i(j+1/2)} = n^k_{i(j)} + 0.5 \phi(r_{j})(n^k_{i(j+1)}-n^k_{i(j)}), \quad n^R_{i(j+1/2)} = n^k_{i(j)} - 0.5 \phi(r_{j+1})(n^k_{i(j+2)}-n^k_{i(j+1)}),
\]

\[ 
 n^L_{i(j-1/2)} = n^k_{i(j-1)} + 0.5 \phi(r_{j-1})(n^k_{i(j)}-n^k_{i(j-1)}), \quad n^R_{i(j-1/2)} = n^k_{i(j)} - 0.5 \phi(r_{j})(n^k_{i(j+1)}-n^k_{i(j)}),
\]
where
\[
r_j=\frac{n^k_{i(j)}-n^k_{i(j-1)}}{n^k_{i(j+1)}-n^k_{i(j)}},
\]
and the function $\phi(\cdot)$ is the chosen flux-limiter. In particular, here we choose the \textit{ospre} flux-limiter, i.e.
\[
\phi(r)=\frac{1.5 \left(r^2+r\right)}{\left(r^2+r+1\right)}.
\]
We then compute the gradient of the pressure at each cell interface. {\textcolor{black}{To do so, we first define 
\[
p_{j+1/2}^L=\sum_{i=1}^I \omega_i n_{i(j+1/2)}^L, \quad p_{j+1/2}^R=\sum_{i=1}^I \omega_i n_{i(j+1/2)}^R.
\]
We then compute the quantities $\nabla p_{j+1/2}^L$ and $\nabla p_{j+1/2}^R$ using: a central differencing scheme in the interior of the spatial domain; a forward  differencing scheme at the left endpoint of the spatial domain; and a backward differencing scheme at the right endpoint of the spatial domain. Finally, we employ}} the reconstruction
\[
 (\nabla p)_{j+1/2}= 0.5((\nabla p)_{j+1/2}^L+ (\nabla p)_{j+1/2}^R ).
\]
The flux across the spatial cell interface for the discretisation of the $i^{th}$ component of the solution is given by 
\[
F_{j+1/2}^{n_i} = \mu_i \, n^L_{i(j+1/2)} \max(0, -(\nabla p)_{j+1/2}) + \mu_i\, n^R_{i(j+1/2)} \min(0, -(\nabla p)_{j+1/2}). 
\]

Altogether, upon splitting between conservative and non-conservative terms, the numerical scheme reads as
\begin{eqnarray*}
 n^{k+1/2}_{i(j)} &=& n^k_{i(j)} + \Delta t \left[- \frac{F^{n_i}_{j+1/2}-F^{n_i}_{j-1/2}}{\Delta x} \right],\\
 n^{k+1}_{i(j)} &=& n^{k+1/2}_{i(j)} + \Delta t \, \alpha_i \, n^{k+1/2}_{i(j)} G(p_{j}^k).
\end{eqnarray*}

\end{appendices}

\bibliography{sn-bibliography}

\end{document}